%% file: arXiv_v1.tex
\documentclass[11pt]{amsart}

\input{running_preamble}

\begin{document}

\subjclass[2020]{
    57K10, 
    57K33, 
                58D10 
                (primary);
                55R05, 
                57M07 
                (secondary)
}
\keywords{Legendrian embedding, motion group.}

\title[Motion groups, Legendrian links, non-trivial loops of embeddings]{Motion groups, Legendrian links, and non-trivial loops of embeddings}
\author{Gabriel Corrigan and Isacco Nonino}
\address{School of Mathematics and Statistics, University of Glasgow, University Place, Glasgow G12 8QQ}
\thanks{This work was developed and written without the use of artificial intelligence models. The authors do not consent to the processing of this document by artificial intelligence models.}

\begin{abstract}
We introduce, for an embedded Legendrian link~$L$ in a contact 3-manifold~$(M,\xi)$, the group of its \emph{Legendrian motions}. Analogously with the smooth case, we prove that this Legendrian motion group is isomorphic to the fundamental group of the space of unparametrised Legendrian embeddings. For all Legendrian knots in the standard contact 3-sphere, we characterise Legendrian motions in terms of contact mapping classes of the knot exterior. In an appendix, we give a short exposition of our techniques applied to the smooth case, comparing to the classical Dahm homomorphism. 
\end{abstract}

\maketitle

\tableofcontents

\section{Introduction}\label{sec:intro}

Understanding the space of Legendrian embeddings of knots in contact 3-manifolds is one of the central topics in recent low-dimensional contact topology. Recent work of Fern\'andez, Mart\'inez-Aguinaga, Min, and Presas \cite{Min-Fernandez, Fernandez-Aguinaga-Presas, Fernandez-Presas-Aguinaga-Homotopy-type, Aguinaga,Xabi2026} has dealt with the homotopy theory of several spaces of Legendrian embeddings. Despite much progress, even the space of Legendrian embeddings in the standard tight sphere~$(S^3,\xi_{std})$ is still shrouded in mystery. Forgetting the contact structure gives an obvious inclusion 
\[\tag{$\ast$}\label{eqn:inclusion of LEmb into Emb}\lemb\!\left(S^1, (S^3,\xi_{std})\right) \hookrightarrow \emb\!\left(S^1,S^3\right)\]
of the space of Legendrian embeddings into the space of smooth embeddings. The induced map at the~$\pi_0$-level has been studied for many connected components of the space of Legendrian embeddings. It has been shown by Fuchs--Tabachnikov that the map is surjective \cite{Fuchs_Tabachnikov} -- i.e. every smooth isotopy type of smooth knots has a Legendrian representative. On the other hand, these~$\pi_0$-level maps fail to be injective in general: there are families of pairs of Legendrian embeddings which are smoothly isotopic but not \emph{Legendrian isotopic}, that is, isotopic through Legendrian embeddings (e.g. see Eliashberg--Fraser \cite{Eliashberg-Fraser}). As for the~$\pi_1$-level, work of K\'alm\'an \cite{Kalman} produces large families of Legendrian knot types~$L$ which witness failure of injectivity of the homomorphism on~$\pi_1$ induced by the inclusion (\ref{eqn:inclusion of LEmb into Emb}). Higher homotopy groups have been investigated by Mart\'inez-Aguinaga \cite{Aguinaga,Xabi2026}.

Although the correspondence between the smooth and contact categories has been analysed extensively for knot embeddings, the notion of \emph{motions} has not yet been transferred to the contact world. The purpose of this article is to begin this translation for Legendrian links in contact 3-manifolds.

Roughly speaking, a (smooth) motion of a compact submanifold~$N \subset M$ is a path of diffeomorphisms of~$M$ starting at the identity~$\id_M$ and ending at a diffeomorphism which preserves~$N$ setwise. The group of all such is the \emph{motion group}~$\motion{N}{M}$, first defined in this way (topologically) by Dahm \cite{Dahm62} (see Goldsmith's exposition \cite{GoldsmithMotionGroups81}). Wattenberg showed that in fact~$\motion{N}{M}$ is isomorphic to the fundamental group of (a connected component of) the space of \emph{unparametrised embeddings}~$N \hookrightarrow M$ \cite{WattenbergSmoothMotionsUnlink72}.

Our starting contribution is to define a \emph{Legendrian motion group} whose elements are \emph{Legendrian motions}, in direct analogy with Dahm's original definition. We then realise  the Legendrian motion group as the fundamental group of an embedding space. Let~$L \colon \bigsqcup S^1 \hookrightarrow M$ be a Legendrian link in a contact 3-manifold~$(M, \xi)$, and let~$\mathcal{LE}(L)$ be the connected component of 
\[\lsub\!\left(\bigsqcup S^1, (M, \xi)\right) \coloneqq \faktor{\lemb\!\left(\bigsqcup S^1, (M, \xi)\right)}{\diff\!\left(\bigsqcup S^1\right)}\]
containing~$L$.

\begin{customthm}{A}[\ref{thm:legendrian motion group as pi_1 of embedding space}]\label{customthm:legendrian motion group as pi_1 of embedding space}
    The Legendrian motion group~$\lmotion{L}{(M, \xi)}$ is isomorphic to~$\pi_1\!\left(\mathcal{LE}(L)\right)$.
\end{customthm}

Our aim in introducing Legendrian motions is to provide a new language, which may bring new techniques to bear, for investigating Legendrian embeddings and for comparing the smooth and contact categories in this setting. We show that some elementary properties of (smooth) motion groups also hold in the Legendrian setting (see \S\ref{subsec:basic properties of Legendrian motion groups}). The following consequence can be viewed as a potential strategy for obstructing Legendrian isotopy between links in contact 3-manifolds.

\begin{customcor}{B}[\ref{corollary:legendrian_isotopy_implies_isomorphism}]\label{customcor:legendrian isotopy implies isomorphism}
    If~$K, K'$ are two Legendrian isotopic links in~$(M, \xi)$, then their Legendrian motion groups are isomorphic.
\end{customcor}

Along the way to proving \emph{Theorem \ref{customthm:legendrian motion group as pi_1 of embedding space}}, we prove contact analogues of some useful classical results from the smooth setting, which may be of independent interest. In particular, we point out \emph{Theorems \ref{thm:parametrised_fiber_bundle} and \ref{thm:unparametrised_fiber_bundle}}, which give locally trivial fibrations $\cont_c\!\left(M, \xi\right) \to \lemb\!\left(K, (M, \xi)\right)$ and $\cont_c\!\left(M, \xi\right) \to \lsub\!\left(K, (M, \xi)\right)$ respectively, and identify their fibres with special subgroups of $\cont_c\!\left(M, \xi\right)$.

As a first example of how this language could lead to new techniques, we connect Legendrian motions to contact mapping classes. The \emph{Dahm homomorphism} is a natural action of the (smooth) motion group~$\motion{N}{M}$ on~$\pi_1\!\left(M \setminus N\right)$ by (outer, if~$M$ is compact) automorphisms. In the Legendrian setting there does not seem to be a similar natural realisation of motions as automorphisms of the knot group. Nevertheless, for a Legendrian knot~$K$ in the standard tight~$\!\left(S^3, \xi_{std}\right)$, we characterise the Legendrian motion group in terms of the \emph{contact mapping class group}~$\cmcgext{K}$: the group of contact isotopy classes of contactomorphisms of the Legendrian knot exterior that fix the boundary. Recall that a Legendrian knot is \emph{Legendrian invertible} if it is contact-isotopic to itself with the opposite orientation, and \emph{Legendrian non-invertible} if there is no such contact-isotopy.

\newpage 

\begin{customthm}{C}[\ref{thm:legendrian motions as contact mapping classes}]\label{customthm:legendrian motions as contactomorphisms of exterior}
    Let~$K$ be a Legendrian knot in~$\std$.
    \begin{enumerate}[(a)]
        \item If~$K$ is Legendrian non-invertible, then there is an isomorphism
        \[\Lambda \colon \lmotionstd{K} \xlongrightarrow{\cong} \cmcgext{K}.\]
        \item If instead~$K$ is Legendrian invertible, then there is an isomorphism
        \[\Lambda \colon \lmotionstd{K} \xlongrightarrow{\cong} \cmcgext{K} \rtimes \faktor{\mathbb{Z}}{2\mathbb{Z}}.\]
    \end{enumerate}
\end{customthm}

In other words, non-triviality of Legendrian motions is detected by non-triviality of contact mapping classes of the knot exterior, and vice versa. We view this as a moral substitute for the Dahm homomorphism in the Legendrian setting. As an application, we discuss how one may view the `K\'alm\'an loop' \cite{Kalman} (a loop of Legendrian torus knots which is smoothly null-homologous and yet contact isotopically non-contractible) as a contact mapping class, as first pointed out by Min--Fern\'andez \cite{Min-Fernandez}.

The proof of \emph{Theorem \ref{customthm:legendrian motions as contactomorphisms of exterior}} mixes various homotopy-theoretic results of Eliashberg--Mishachev, Fern\'andez--Min, and Fern\'andez--Mart\'inez-Aguinaga--Presas \cite{ Eliashberg-Mishachev, Min-Fernandez, Fernandez-Aguinaga-Presas} with the following proposition, which is one of our central technical steps.
\begin{customprop}{D}[\ref{prop:param_to_unparam}]\label{customprop:param to unparam}
    Let~$(M,\xi)$ be a contact 3-manifold and~$K$ be a Legendrian knot in~$(M,\xi)$. Then the fibration
    \[\diff\!\left(S^1\right) \hookrightarrow \lemb\!\left(K,(M, \xi)\right)  \rightarrow  \lsub\!\left(K,(M, \xi)\right)\]
    is locally trivial. 
\end{customprop}
\noindent One may ask: to what extent does a similar argument work in the smooth case? The smooth analogue of \emph{Proposition \ref{customprop:param to unparam}} was classically established by Cerf and Palais \cite{CerfTopologiePlongements61, PalaisLocalTriviality60} (see also Wattenberg \cite{WattenbergSmoothMotionsUnlink72}). Combining it with work of Budney and Cohen \cite{Budney-Cohen, Budney} gives tools with which one can hope to implement the same strategy, thus reinterpreting smooth motions as mapping classes of the knot exterior. However, it seems that a result as clean as \emph{Theorem \ref{customthm:legendrian motions as contactomorphisms of exterior}} is only possible thanks to the rigidity of the contact-geometric setting. We briefly discuss this smooth implementation, and possible ramifications for the Dahm homomorphism for smooth knots in~$S^3$, in the Appendix.

\subsection*{Outline}\label{subsec:outline}

In \S\ref{sec:background}, we give the necessary background on fibre sequences, Legendrian links and (smooth) motion groups. We introduce the Legendrian motion group in \S\ref{sec:Legendrian motion group}, and prove \emph{Theorem \ref{customthm:legendrian motion group as pi_1 of embedding space}} and \emph{Corollary \ref{customcor:legendrian isotopy implies isomorphism}}. In \S\ref{sec:monodromy maps} we prove \emph{Theorem \ref{customthm:legendrian motions as contactomorphisms of exterior}}, setting up the correspondence between Legendrian motions and the contact mapping classes. Appendix \ref{appendix:smooth_case} is concerned with translating the techniques of \S\ref{sec:monodromy maps} to the smooth case.

\subsection*{Acknowledgements}
We would like to thank Susanna Terron, Livio Ferretti, and Rachael Boyd for useful conversations, and Eduardo Fern\'andez for very helpful correspondence, including pointing out a key result for the proof of \emph{Lemma \ref{lem:local_contractibility}}. We thank Boyd, Fern\'andez, and Brendan Owens for their comments on a draft of this paper. We thank our doctoral supervisors Andy Wand \& Ana Lecuona (IN) and Rachael Boyd, Tara Brendle, \& Brendan Owens (GC) for guidance and support. 

IN was supported by EPSRC Studentship No. ~EP/W524359/1, and GC by a College Scholarship from the College of Science and Engineering, University of Glasgow. 

\section{Fibre sequences, Legendrian links, and smooth motion groups}\label{sec:background}

 In this section we set up some necessary homotopy theory technology, and give the necessary preliminaries on Legendrian links and the classical smooth motion group. We highlight that all the function spaces appearing in this article are equipped with the Whitney~$C^\infty$-topology, and have the homotopy type of~$CW$-complexes. 

\subsection{$G$-locally retractile spaces and the Cerf--Palais criterion}\label{subsec:G-locally retractile}

We set out some classical results of Palais \cite{PalaisLocalTriviality60} (see also \cite{LimaLocalTrivialityRestriction64}), proven independently and in greater generality by Cerf \cite{CerfTopologiePlongements61}, which will enable us to conclude that certain maps are locally trivial fibrations. This exposition follows that of Cantero Mor\'an--Randal-Williams \cite[\S2]{Cantero-Moran_Randal-Williams} and Boyd--Bregman--Steinebrunner \cite[\S2.2]{Boyd-Bregman-Steinebrunner}. 

\begin{definition}\label{def:G-locally retractile}
    A space~$X$ with an action of a topological group~$G$ is \emph{$G$-locally retractile} if, for each~$x \in X$, there is a neighbourhood~$U \subseteq X$ of~$x$ and a map~$\rho \colon U \to G$ such that for all~$u \in U$, we have~$\rho(u) \cdot x = u$. 
\end{definition}

We will rely on the \emph{Cerf--Palais criterion}, which is the first part of the following lemma.

\begin{lemma}[\cite{Cantero-Moran_Randal-Williams}, 2.5 \& 2.6]\label{lem:properties of G-locally retractile spaces}
    Let~$X$ be a~$G$-locally retractile space.
    \begin{enumerate}[(i)]
        \item Any~$G$-equivariant map~$f \colon Y \to X$ is a locally trivial fibration. \label{lemitem:Cerf-Palais criterion}
        \item If a~$G$-equivariant map~$g \colon X \to Z$ has local sections, then~$Z$ is also~$G$-locally retractile. \label{lemitem:equivariant map with local sections implies target is locally retractile}
    \end{enumerate}
\end{lemma}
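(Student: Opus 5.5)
For part (i), we take a $G$-equivariant map $f\colon Y\to X$ and build local trivialisations directly from the local retractions. Fix $x\in X$, and let $U\ni x$ and $\rho\colon U\to G$ be as in Definition~\ref{def:G-locally retractile}, so that $\rho(u)\cdot x=u$. Write $F=f^{-1}(x)$. Define
\[\Phi\colon U\times F\to f^{-1}(U),\qquad \Phi(u,y)=\rho(u)\cdot y .\]
Equivariance gives $f(\rho(u)\cdot y)=\rho(u)\cdot f(y)=\rho(u)\cdot x=u$, so $\Phi$ lands in $f^{-1}(U)$ and commutes with the projections to $U$. The candidate inverse is
\[\Psi\colon f^{-1}(U)\to U\times F,\qquad \Psi(z)=\bigl(f(z),\,\rho(f(z))^{-1}\cdot z\bigr).\]
Its second coordinate lies in $F$ because $f(\rho(f(z))^{-1}\cdot z)=\rho(f(z))^{-1}\cdot f(z)=x$, using $\rho(f(z))\cdot x=f(z)$. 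One checks directly that $\Phi\circ\Psi=\id$ and $\Psi\circ\Phi=\id$. Both maps are continuous, since the action $G\times Y\to Y$, inversion in $G$, $\rho$ and $f$ are all continuous. Hence $f$ is trivial over $U$. Since $x$ was arbitrary, $f$ is a locally trivial fibration, with fibre over $x$ equal to $F$.

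For part (ii), let $g\colon X\to Z$ be equivariant with local sections, and fix $z\in Z$. Take a neighbourhood $V\ni z$ and a section $s\colon V\to X$ with $g\circ s=\id_V$, and put $x=s(z)$. Since $X$ is $G$-locally retractile, there are $U\ni x$ and $\rho\colon U\to G$ with $\rho(u)\cdot x=u$. Set $W=V\cap s^{-1}(U)$, which is an open neighbourhood of $z$ by continuity of $s$, and define $\sigma=\rho\circ s\colon W\to G$. For $w\in W$,
\[\sigma(w)\cdot z=\rho(s(w))\cdot g(x)=g\bigl(\rho(s(w))\cdot x\bigr)=g(s(w))=w,\]
so $\sigma$ is a local retraction at $z$. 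Hence $Z$ is $G$-locally retractile.

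Neither part has a genuine obstacle; the whole proof is bookkeeping with equivariance. The two points that need care are the continuity of $\Psi$ in part (i), which uses continuity of inversion and of the action jointly in both variables, and shrinking the domain to $W$ in part (ii) so that $s$ lands in $U$. If "locally trivial fibration" is meant in a sense that also requires $f$ to be surjective, this holds when $G$ acts transitively. In general the statement is local triviality over the image.
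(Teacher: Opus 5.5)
Your proof is correct. It is the standard argument from the cited Cantero Mor\'an--Randal-Williams lemmas, which the paper quotes without reproducing: the trivialisation $(u,y)\mapsto\rho(u)\cdot y$ with inverse $z\mapsto\bigl(f(z),\rho(f(z))^{-1}\cdot z\bigr)$ for (i), and the local retraction $\rho\circ s$ on $s^{-1}(U)$ for (ii). One small refinement to your closing remark: your argument already gives local triviality over all of $X$, not just over the image, since fibres outside the image are empty and the image is open and closed, so no transitivity is needed for that.
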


Combining these statements, we obtain: if~$X$ is a~$G$-locally retractile space, then a~$G$-equivariant map~$X \to Z$ with local sections is in fact a locally trivial fibration.

\subsection{Legendrian links in~$(M,\xi)$}
Here we discuss only the essential background for the contact geometric notions we require; we refer the reader to \cite{Etnyre_lecture_notes, Geiges}, for a more extensive treatment. We recall the Legendrian embeddings and some of their standard properties. Throughout the paper we will make clear whether results stated for knots extend to links or not.

A \emph{contact structure} on an oriented 3-manifold~$M$ is a maximally non-integrable plane field~$\xi$ in the tangent bundle of~$M$. A contact structure~$\xi$ is \emph{coorientable} if the normal bundle is orientable; that is, there exists an orientable line field transverse to~$\xi$. If~$\xi$ is cooriented, one defines~$\xi$ as the kernel of a 1-form~$\alpha$ which satisfies the \emph{contact condition}~$\alpha \wedge d\alpha >0$ (this is for \emph{positive} contact structures; the contact condition for \emph{negative} ones is~$\alpha \wedge d\alpha <0$). 

For the remainder of this section, fix a cooriented contact structure~$\xi$ on a closed, connected 3-manifold~$M$.

\begin{definition}\label{def:Legendrian-knot}
    Let~$(M, \xi)$ be a contact 3-manifold. An embedding~$\gamma \colon S^1 \hookrightarrow M$ is said to be \emph{Legendrian} if it is tangent to the contact structure at every point; that is, if~$d\gamma(\theta) \in \xi_{\gamma(\theta)}$ for all~$\theta \in S^1$. A \emph{Legendrian knot} is the image of such an embedding. 
\end{definition} 

A very important result in contact topology is the following proposition, which can be obtained from the work of Fuchs--Tabachnikov \cite{Fuchs_Tabachnikov}. See also \cite[Theorem 2.5]{EtnyreBook}
\begin{proposition}\label{prop:legendrian_approx}
    Any smooth isotopy class of knots in a contact manifold~$(M,\xi)$ admits a Legendrian representative.
\end{proposition}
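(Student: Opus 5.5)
The plan is to prove Proposition \ref{prop:legendrian_approx} by first choosing a smooth representative and then replacing it by a $C^0$-close Legendrian knot obtained by local approximation. Fix a smooth knot $\gamma \colon S^1 \hookrightarrow M$ in the given isotopy class. After a small smooth isotopy we may assume $\gamma$ is generic, meaning it is transverse to $\xi$ except at finitely many points. The goal is then to produce a Legendrian embedding $\gamma'$ that is $C^0$-close to $\gamma$ and is smoothly isotopic to it.

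The construction is local. Cover the image of $\gamma$ by finitely many Darboux balls $B_1, \dots, B_n$, and choose a subdivision $0 = t_0 < t_1 < \dots < t_n = 1$ so that each arc $\gamma([t_{i-1}, t_i])$ lies in $B_i$. On each ball we have coordinates in which $\xi = \ker(dz - y\,dx)$, and there a Legendrian arc is determined by its front projection $(x(t), z(t))$ with $y = dz/dx$.

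\begin{enumerate}
\item Approximate the projection $(x(t), z(t))$ of $\gamma$ by a front that zigzags with many small cusps. The zigzags can be chosen so that the slopes $dz/dx$ follow the prescribed $y(t)$ closely, to within any given $\epsilon$.
\item Impose matching conditions at the endpoints $\gamma(t_i)$, fixing the position and also a Legendrian tangent direction there, so that consecutive arcs glue into a smooth closed Legendrian curve.
\end{enumerate}

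Making the $y$-coordinate $C^0$-close while keeping the front $C^0$-close to $(x, z)$ is the standard ``stabilisation zigzag'' trick. It is the one quantitative step, since both closeness requirements have to hold at once.

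It remains to check that $\gamma'$ is an embedding isotopic to $\gamma$. Cusps and crossings in the front are harmless in this respect: self-intersections of the Legendrian lift occur only where two front branches cross with equal slope, and a small perturbation of the front removes these. The resulting $\gamma'$ lies in a tubular neighbourhood $\nu(\gamma)$ of $\gamma$ and is $C^0$-close to $\gamma$. The main obstacle is that $C^0$-closeness alone does not imply smooth isotopy. To handle this, arrange that $\gamma'$ winds exactly once along the core of $\nu(\gamma) \cong S^1 \times D^2$ and has no local knotting. This holds because each local piece is isotopic, inside the ball $B_i$ and relative to its endpoints, to the original arc: in a small ball, a graph-like zigzag lift with monotone $x$-component is unknotted relative to its ends. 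Monotonicity in $x$ can be ensured after first rotating the Darboux chart so that $\gamma$ has nonvanishing $dx$ on each short arc. The piecewise isotopies then glue to an isotopy from $\gamma'$ to $\gamma$, proving the proposition.

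For the paper, however, the intended proof is simply to cite Fuchs--Tabachnikov \cite{Fuchs_Tabachnikov} and \cite[Theorem 2.5]{EtnyreBook}, where exactly this approximation is carried out.
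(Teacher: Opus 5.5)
The paper gives no proof of its own here: the statement is quoted from Fuchs--Tabachnikov \cite{Fuchs_Tabachnikov} and \cite[Theorem 2.5]{EtnyreBook}. Your closing sentence therefore matches the paper, and your outline (Darboux charts, zigzag fronts whose slopes track $y$, gluing) is the standard one. However, the step you yourself flag as the main obstacle, the smooth isotopy from $\gamma'$ to $\gamma$, is not established by what you write.

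\begin{itemize}
\item A front with cusps reverses its $x$-direction at every cusp, so your zigzag lift is never $x$-monotone. Monotonicity of the original arc tells you nothing about the approximation.
\item Neither the smallness of $B_i$ nor $C^0$- and slope-closeness prevents knotting. Compose the contact dilation $(x,y,z)\mapsto(\lambda x,\lambda y,\lambda^2 z)$ with the contact shear $(x,y,z)\mapsto(x,y+c,z+cx)$ and a translation. This produces arbitrarily small knotted Legendrian arcs, with slopes near any prescribed $c$, at any point. Splicing one into your zigzag gives a front that meets all your closeness requirements but whose lift is locally knotted.
\item The intermediate criterion ``winds once along the core of $\nu(\gamma)$ and has no local knotting'' would not suffice anyway. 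The Mazur pattern in $S^1\times D^2$ has winding number one and no local knots, yet is not isotopic to the core. A genuine piecewise isotopy would make this criterion unnecessary.
\end{itemize}

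What is missing is a constraint built into the choice of zigzags. On an arc where $x$ is monotone, choose the front to have no crossings. One way is a stack of thin wedges of slope close to $y$ advancing along the graph of $z(x)$, with consecutive wedges offset by much less than their length. Then the Legendrian piece, like the original piece, projects injectively to the $xz$-plane. It therefore has a crossingless diagram and is unknotted relative to its ends; this is the actual content your word ``graph-like'' needs to carry.

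The gluing also needs more than matching a point and a Legendrian direction at $\gamma(t_i)$. Since $\gamma$ is in general not tangent to $\xi$ at $\gamma(t_i)$, that matching gives neither a $C^\infty$ closed curve nor local isotopies that glue without corners. Instead, first isotope $\gamma$ so that it is Legendrian on a short interval around each $t_i$, and take $\gamma'$ to coincide with $\gamma$ there. Then the pieces glue smoothly. The local isotopies can be taken relative to neighbourhoods of the endpoints and performed one piece at a time inside thin tubes around the original arcs, so they compose to an isotopy from $\gamma'$ to $\gamma$.
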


\emph{Proposition \ref{prop:legendrian_approx}} has important consequences. When considering a smooth knot type, i.e. a connected component in the space of smooth knot embeddings in~$M$, we know that it contains \emph{at least} some Legendrian knot type, i.e. a connected component of Legendrian embeddings in~$(M,\xi)$.  Distinct values of the \emph{Thurston--Bennequin number} ($\tb$) and \emph{rotation number} ($\rot$) correspond to distinct Legendrian representatives of a smooth knot type (see \cite{Etnyre_lecture_notes} for definitions). In general not all pairs~$(\tb, \rot)$ can be realised as invariants of a Legendrian knot due to the \emph{Thurston-Bennequin inequality} \cite{Eliashberg_inequality}. There are however plenty of examples where multiple pairs are realised (e.g. for the unknot in~$(\mathbb{R}^3,\xi_{std})$). Thus the natural forgetful map 
\[\lemb\!\left(K,(M,\xi)\right) \to \emb(K,M)\]
is surjective but generically (highly) non-injective. 

Contact structures come in two flavours: \emph{tight} and \emph{overtwisted}. The behaviours of these two classes are quite distinct, but in this paper we tend not to make any distinction, as we are interested in Legendrian embeddings in general and less in the specific geometry of the contact structure. Typically however, when working with~$S^3$ and~$\mathbb{R}^3$, we will be using their standard tight structures, defined as follows.

The standard contact structure on~$\mathbb{R}^3$ is given by~$$\xi_{std} \coloneqq \ker(dz - ydx),$$ whereas the standard contact
structure on~$S^3 \subset \mathbb{C}^2$ is given by~$$\xi_{std} \coloneqq TS^3 \cap JTS^3,$$
where~$J$ denotes the standard complex structure on~$\mathbb{C}^2$. These two structures are closely related:~$(\mathbb{R}^3,\xi_{std})$ is contactomorphic to~$(S^3\setminus \{p\},\xi_{std}|)$,
for any choice of point~$p \in S^3$ \cite[Proposition 2.1.8]{Geiges}.

\begin{definition} \label{definition:contact_vector_field}
A \emph{contact vector field}~$v$ is a vector field whose flow preserves~$\xi$. Precisely, if~$\xi=\ker(\alpha)$, then the flow~$\phi_t$ of~$v$ preserves~$\xi$ if we have~$\phi_t^*\alpha=f\alpha$ for some non-vanishing function~$f$.
\end{definition}
\begin{definition}\label{def:contactomorphism} 
    Let~$(M,\xi)$ be a contact 3-manifold and~$f$ a self-diffeomorpism of~$M$. We say that~$f$ is a \emph{contactomorphism} (a.k.a. contact-diffeomorphism) if~$f$ preserves~$\xi$, i.e.~$f^*\xi=\xi$. The subgroup~$\cont(M, \xi) \leq \diff(M)$ consisting of all such is the \emph{contactomorphism group} of~$(M, \xi)$. It is endowed with the subspace topology.
\end{definition}

\subsection{The smooth motion group}\label{sec:intro of smooth motion group}

In this subsection, we define motion groups and recall some relevant theorems. For some additional background, see \cite{GoldsmithMotionGroups81, WattenbergSmoothMotionsUnlink72}. In what follows,~$M$ is a smooth manifold,~$N\subset M$ a smooth compact submanifold,~$\diff_c(M)$ denotes the group of compactly supported diffeomorphisms of~$M$, and~$\diff_c(M, N)$ is the subgroup of those diffeomorphisms preserving~$N$ setwise.

\begin{definition}\label{def:smooth motion}
    A \emph{motion of~$N$ in~$M$} is a path~$f = \{f_t\}_{t \in [0,1]}$ in~$\diff_c(M)$ such that~$f_0 = \id_M$ and~$f_1 \in \diff_c(M, N)$   
\end{definition}

\begin{definition}\label{def:stationary_motion}
A motion~$f$ is \emph{stationary} if~$f_t \in \diff_c(M, N)$ for all~$t \in [0, 1]$.
\end{definition}

The \emph{product}~$g \circ f$ of two motions~$f, g$ of~$N$ in~$M$ is given by
\begin{equation*}
    (g \circ f)_t =
    \begin{cases}
        f_{2t} & 0 \leq t \leq \frac{1}{2} \\
        g_{2(t-\frac{1}{2})} \circ f_1 & \frac{1}{2} \leq t \leq 1.
    \end{cases}
\end{equation*}

In other words: do~$f$ twice as fast, and then do~$g$ twice as fast after translating by~$f_1$. The \emph{inverse}~$f^{-1}$ of a motion~$f$ is given by~$(f^{-1})_t = f_{1-t} \circ f_1^{-1}$.

\begin{definition}\label{def:equivalent_motions}
Two motions~$f, g$ are \emph{equivalent} (written~$f \equiv g$) if~$g^{-1} \circ f$ is homotopic rel endpoints to a stationary motion. 
\end{definition}

The set of equivalence classes of motions forms a group under the product defined above. This is simply the relative fundamental group~$\pi_1 \!\left( \diff_c(M), \diff_c(M, N), \id_M \right)$. Although \emph{a priori} the relative fundamental group is simply a \emph{set} of homotopy classes of paths, this set can be given a group structure in two ways, since~$\diff_c$ is a topological group. One is the product just defined; the other is by pointwise composition~$(g \cdot f)_t = g_t \circ f_t$. One can show that the two paths~$g \cdot f$ and~$g \circ f$ are homotopic rel endpoints. 

\begin{definition}\label{def:smooth motion group}
    The \emph{(smooth) motion group of~$N$ in~$M$} is the relative fundamental group 
    \[\motion{N}{M} \coloneqq \pi_1 \!\left( \diff_c(M), \diff_c(M, N), \id_M \right).\]
\end{definition}

The equivalence relation on motions can be rephrased as follows \cite[Proposition 2.8]{GoldsmithMotionGroups81}. Let~$f, g$ be motions of~$N$ in~$M$. Then~$f \equiv g$ if and only if~$f$ is homotopic to a motion~$f'$ of~$N$ such that~$f'_t(N) = g_t(N)$ for all~$t \in [0, 1]$.

Smooth motion groups are helpfully reinterpreted as fundamental groups of embedding spaces. Let~$\emb(N, M)$ denote the space of smooth embeddings of~$N$ into~$M$. Equipping~$\emb(N, M)$ with the~$C^\infty$-Whitney topology makes it a principal~$\diff(N)$-bundle, where~$\diff(N)$ acts by precomposition. The quotient 
\[\sub(N, M) \coloneqq \faktor{\emb(N, M)}{\diff(N)}\]
is the \emph{unparametrised embedding space}. 

Wattenberg \cite{WattenbergSmoothMotionsUnlink72} showed that the motion group is in fact isomorphic to the fundamental group of the unparametrised embedding space. Precisely: fix a favoured embedding~$i \colon N \hookrightarrow M$ as a basepoint of~$\emb(N, M)$, and its image~$i(N)$ as a basepoint of~$\sub(N, M)$. Let~$\calE(i)$ denote the connected component of~$\sub(N, M)$ which contains the basepoint~$i(N)$. 

\begin{theorem}[\cite{WattenbergSmoothMotionsUnlink72}, Lemma 1.4]\label{thm:smooth motion group is pi_1 of embedding space Wattenberg}
   ~$\motion{N}{M} \cong \pi_1\!\left(\calE(i), i(N)\right)$.
\end{theorem}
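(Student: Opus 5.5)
The plan is to realise $\sub(N,M)$ as a homogeneous space of $\diff_c(M)$ and read off the motion group from the long exact sequence of a fibration. Consider the orbit map
\[\Phi \colon \diff_c(M) \longrightarrow \sub(N,M), \qquad f \longmapsto f(i(N)).\]
It is equivariant for the action of $\diff_c(M)$ on itself by left multiplication and on $\sub(N,M)$ by postcomposition. The stabiliser of the basepoint $i(N)$ is exactly $\diff_c(M,N)$. Since $\diff_c(M)$ is connected to $\id_M$ along paths, the image of the identity component lands in $\calE(i)$.

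The first step is to show that $\sub(N,M)$, or at least $\calE(i)$, is $\diff_c(M)$-locally retractile. I would follow Palais and Cerf.
\begin{itemize}
\item Near a submanifold $N'=j(N)$, choose a tubular neighbourhood. Nearby unparametrised embeddings are then graphs of small sections of the normal bundle. Using a local section of $\emb(N,M)\to\sub(N,M)$, this reduces to choosing embeddings $C^\infty$-close to $j$.
\item Each such section extends, via a bump function and the exponential map, to a compactly supported diffeomorphism isotopic to the identity. This extension depends continuously on the section and sends $N'$ to the nearby submanifold.
\end{itemize}
This gives the map $\rho$ of \emph{Definition \ref{def:G-locally retractile}}. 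By \emph{Lemma \ref{lem:properties of G-locally retractile spaces}}\eqref{lemitem:Cerf-Palais criterion}, $\Phi$ is then a locally trivial fibration, and hence a Serre fibration, with fibre $\diff_c(M,N)$ over $i(N)$. Its restriction to the components mapping into $\calE(i)$ is surjective onto $\calE(i)$, because local retractility makes orbits open. I expect this step, the continuity of the extension producing local sections, to be the main technical obstacle. It is, however, exactly the classical Cerf--Palais input, so I would cite it rather than reprove it.

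With the fibration $\Phi$ in hand, use the standard isomorphism for a Serre fibration $p\colon E\to B$ with fibre $F=p^{-1}(b)$:
\[p_*\colon \pi_1(E,F,e)\xrightarrow{\ \cong\ }\pi_1(B,b).\]
It is bijective by path lifting and homotopy lifting rel endpoints. Applied to $\Phi$, this identifies $\motion{N}{M}=\pi_1(\diff_c(M),\diff_c(M,N),\id_M)$ with $\pi_1(\calE(i),i(N))$. Concretely, a motion $f$ maps to the loop $t\mapsto f_t(i(N))$.

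It remains to check that this bijection respects the group structures, using the product $g\circ f$.
\begin{itemize}
\item The image of $g\circ f$ is the loop $f_{2t}(i(N))$ followed by $g_{2t-1}(f_1(i(N)))$.
\item Since $f_1(i(N))=i(N)$, this is the concatenation of the two image loops, so $p_*$ is a homomorphism.
\end{itemize}
Finally, the rephrasing of equivalence via \cite[Proposition 2.8]{GoldsmithMotionGroups81} confirms that the relation $\equiv$ agrees with homotopy of the image loops. This yields Theorem~\ref{thm:smooth motion group is pi_1 of embedding space Wattenberg}.
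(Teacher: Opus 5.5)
Your proposal is correct and follows the same route the paper uses for its Legendrian analogue, Theorem~\ref{thm:legendrian motion group as pi_1 of embedding space}; the paper itself only cites Wattenberg for this smooth statement. The shared steps are: local retractility of $\sub(N,M)$ via local sections of $\emb(N,M)\to\sub(N,M)$, the Cerf--Palais criterion for the orbit map $\diff_c(M)\to\sub(N,M)$ with fibre $\diff_c(M,N)$, and then identification of the relative $\pi_1$ with $\pi_1$ of the base. The only difference is that you invoke the bijection $\pi_1(E,F,e)\cong\pi_1(B,b)$ directly and check multiplicativity by hand, whereas the paper runs a Five Lemma comparison of long exact sequences; your version is, if anything, cleaner, since the low-degree terms in that comparison are only pointed sets.
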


\section{The Legendrian motion group}\label{sec:Legendrian motion group}

The goal of this section is to define the \emph{Legendrian motion group} of a Legendrian knot~$L$ in a contact 3-manifold~$(M,\xi)$.
We will then prove \emph{Theorem \ref{customthm:legendrian motion group as pi_1 of embedding space}} (\ref{thm:legendrian motion group as pi_1 of embedding space}), that it can be viewed as the fundamental group of the space of unparametrised Legendrian embeddings of the knot into~$M$. For ease of notation, the definitions and theorems in this section are stated in terms of knots, but they can be easily extended to links as well.

\subsection{Definition of the Legendrian motion group}\label{subsec:def of Leg motion group}

We attempt to mimic the smooth construction. Let~$(M, \xi)$ be a closed, connected contact 3-manifold, and let~$K \subseteq (M, \xi)$ be a Legendrian knot. Let~$\cont(M, \xi)$ denote the space of contactomorphisms (see \emph{Definition \ref{def:contactomorphism}}) of~$(M, \xi)$, and~$\cont\!\!\left((M, \xi), K\right)$ the subspace consisting of those contactomorphisms which fix~$L$ setwise. Let~$\cont_c(M, \xi)$ and~$\cont_c\!\!\left((M, \xi), K\right)$ denote the respective subspaces of contactomorphisms with compact support. 

\begin{definition}\label{def:Legendrian_motion}
    A \emph{Legendrian motion} of~$K$ in~$(M,\xi)$ is a path~$f= \{f_t\}_{t\in[0,1]}$ in~$\cont_c(M,\xi)$ such that~$f_0=id_M$ and~$f_1 \in \cont_c((M,\xi),K)$.
\end{definition}

That is, a Legendrian motion of~$K$ is a path of contactomorphisms of~$(M, \xi)$ which begins at the identity and finishes at a contactomorphism which preserves~$K$ setwise.

\begin{definition}\label{def:leg_stationary_motion}
    A Legendrian motion~$f$ is \emph{stationary} if~$f_t \in \cont_c((M,\xi),K)$ for all~$t \in [0,1]$.
\end{definition}

The product~$g \circ f$ of two Legendrian motions is defined as in the smooth case: \begin{equation*}
    (g \circ f)_t =
    \begin{cases}
        f_{2t} & 0 \leq t \leq \frac{1}{2} \\
        g_{2(t-\frac{1}{2})} \circ f_1 & \frac{1}{2} \leq t \leq 1.
    \end{cases}
\end{equation*}
Likewise, the \emph{inverse}~$f^{-1}$ of a Legendrian motion~$f$ is given by~$\!\left(f^{-1}\right)_t = f_{1-t} \circ f_1^{-1}$.

\begin{definition}\label{def:leg_equivalent_motions}
    Two Legendrian motions~$f, g$ are said to be \emph{equivalent} (written~$f \equiv g$) if~$g^{-1} \circ f$ is homotopic rel endpoints to a stationary Legendrian motion.
\end{definition}
The set of equivalence classes of Legendrian motions forms a group with the product defined above: it is simply the relative fundamental group~$\pi_1 \!\left( \cont_c(M,\xi), \cont_c((M,\xi),K),\id_K \right)$. As in the smooth case the group structure on the topological group~$\cont_c(M, \xi)$ is, up to homotopy rel endpoints, the same group structure as the one given by pointwise composition.

\begin{definition}\label{def:legendrian motion group}
    The \emph{Legendrian motion group} (of~$K$ in~$(M,\xi)$) is the relative fundamental group 
    \[\lmotion{K}{(M, \xi)} \coloneqq \pi_1 \!\left( \cont_c(M,\xi), \cont_c\!\!\left((M,\xi), K\right),\id_M \right).\]
\end{definition}

\begin{proposition}\label{proposition:homotopic_iff_equivalent}
    Let~$f,g$ be Legendrian motions of~$K$ in~$M$. Then~$f \equiv g$ if and only if~$f$ is homotopic to~$f'$, where~$f'$ is a Legendrian motion of~$K$ in~$M$ such that for all~$t$,~$f'_t(K)=g_t(K)$.  
\end{proposition}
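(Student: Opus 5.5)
The plan is to mirror Goldsmith's argument for the smooth case \cite[Proposition 2.8]{GoldsmithMotionGroups81}. The only new ingredients are that paths must stay inside $\cont_c(M,\xi)$, and that composing contactomorphisms pointwise again gives a contactomorphism.

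\emph{Forward direction.} Suppose $f \equiv g$. Then there is a homotopy rel endpoints $H_s$ from $g^{-1}\circ f$ to a stationary Legendrian motion $h$, with each $H_s$ a path in $\cont_c(M,\xi)$ from $\id_M$ to $g_1^{-1}f_1$. Since the concatenation product and the pointwise product agree up to homotopy rel endpoints in the topological group $\cont_c(M,\xi)$, I will replace $\circ$ by pointwise composition throughout. Thus $g^{-1}\circ f$ is homotopic rel endpoints to the path $t\mapsto (g^{-1})_t \cdot f_t$, and hence to $h$.

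Set $f'_t := g_t\, g_1^{-1}\, g_1\, h_t$, after a careful reparametrisation. Concretely, I will check that $f$ is homotopic to $g\cdot(g^{-1}\circ f)$ in the sense of left-multiplying pointwise by $g$ and using $g\circ g^{-1}\simeq$ constant. It is then homotopic to the path $t\mapsto g_t\, h'_t$, where $h'$ is a path in $\cont_c((M,\xi),K)$ from $\id$ to $g_1^{-1}f_1$. Each $g_t h'_t$ is a contactomorphism, and $g_t h'_t(K) = g_t(K)$ because $h'_t$ preserves $K$ setwise. The endpoint is $g_1 g_1^{-1} f_1 = f_1 \in \cont_c((M,\xi),K)$, so $f'_t := g_t h'_t$ is a Legendrian motion with $f'_t(K)=g_t(K)$.

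\emph{Backward direction.} Suppose $f$ is homotopic to $f'$ with $f'_t(K)=g_t(K)$ for all $t$. First I must settle what ``homotopic'' means. I read it as a homotopy through Legendrian motions, allowing the endpoint to move inside $\cont_c((M,\xi),K)$, since that is the notion giving the same relative homotopy class. Under this reading $f\equiv f'$, and it remains to show $f'\equiv g$.

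Define $k_t := g_t^{-1} f'_t$. This is a path in $\cont_c(M,\xi)$ starting at $\id_M$, and $k_t(K) = g_t^{-1}(g_t(K)) = K$, so $k$ is a stationary Legendrian motion. Since $f' = g\cdot k$ pointwise, $g^{-1}\circ f'$ is homotopic rel endpoints to $g^{-1}\cdot(g\cdot k)$. Here I must be careful that $(g^{-1})_t = g_{1-t}g_1^{-1}$ is not the pointwise inverse. The cleanest route is to work in the relative $\pi_1$ and note that $g\cdot k$ and $g$ define the same class there. Indeed, the homotopy $(s,t)\mapsto g_t k_{st}$ moves the endpoint within $\cont_c((M,\xi),K)$, because $g_1 k_{s}$ preserves $K$: it sends $K$ to $g_1(K)=K$. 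Hence $[f']=[g]$, which is exactly $f'\equiv g$ under the identification of $\equiv$ with equality in $\pi_1(\cont_c(M,\xi),\cont_c((M,\xi),K),\id_M)$.

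In fact the same endpoint-sliding trick also gives the forward direction directly. If $[f]=[g]$ in the relative $\pi_1$, then unwinding the definition produces $f'$ with $f'_t = g_t k_t$, where $k$ is stationary.

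The main obstacle is bookkeeping rather than geometry. I need to verify rigorously that ``$f\equiv g$'' as in Definition~\ref{def:leg_equivalent_motions} coincides with equality of classes in the relative fundamental group. This uses the standard fact that a homotopy of paths rel $\{0\}$ with free endpoint in $\cont_c((M,\xi),K)$ can be traded for a concatenation with a path in that subgroup. It also uses the equivalence of the concatenation and pointwise products. Both facts are purely topological-group statements, valid for any topological group and subgroup, so they transfer verbatim from the smooth setting.
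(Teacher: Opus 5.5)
Your proposal is correct and follows the same idea as the paper: in the forward direction take $f'_t = g_t h_t$, where $h$ is the stationary motion homotopic to $g^{-1}\circ f$, and in the converse note that $t \mapsto g_t^{-1} f'_t$ is stationary. Your version is more careful than the paper's, which writes $f' = g\circ s$ and $s = g^{-1}\circ f$ using the concatenation product. Read literally, that gives $f'_t(K)=g_t(K)$ only up to reparametrisation, and $g^{-1}\circ f$ is not itself stationary; your pointwise products and the endpoint-sliding homotopy $(s,t)\mapsto g_t k_{st}$ supply the missing bookkeeping, including under the reading of ``homotopic'' with the endpoint free in the stabiliser subgroup.
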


\begin{proof}
    We have~$f \equiv g$, so~$f$ is homotopic to~$g \circ s$, where~$s$ is a stationary Legendrian motion. Let~$f'= g \circ s$. Then,~$f'_t(K)=g_t(K)$ for all~$t$. On the other hand, if~$f$ is homotopic to~$f'$, with~$f'_t(K)=g_t(K)$ for all~$t$, then~$s=g^{-1}\circ f$ is a stationary Legendrian motion, so~$f \equiv g$.
\end{proof}

\subsection{Legendrian motion group as a fundamental group}\label{subsec:Leg motion group is pi_1 of Leg emb space}

In this subsection we prove \emph{Theorem \ref{customthm:legendrian motion group as pi_1 of embedding space}} (\ref{thm:legendrian motion group as pi_1 of embedding space}). The main technical engine here is \emph{Theorem \ref{thm:parametrised_fiber_bundle}}, which we then combine with classical arguments adapted to the Legendrian case. 

Let~$\lemb\!\!\left(K,(M, \xi)\right)$ denote the space of Legendrian embeddings~$K \hookrightarrow (M,\xi)$. Equipped with the~$C^\infty$ Whitney topology, this is a principal~$\diff\!\left( S^1\right)$-bundle where~$\diff\!\left( S^1\right)$ acts by precomposition; we call the quotient
\[\lsub\!\left(K,(M, \xi)\right) \coloneqq \faktor{\lemb\!\left(K,(M, \xi)\right)}{\diff\!\left(S^1\right)}\]
the \emph{unparametrised Legendrian embedding space}. 

\begin{remark}\label{remark:orientation}
    For us, the word ``unparametrised'' means the Legendrian submanifold is also unoriented. This differs from the usual convention in the contact setting \cite{Aguinaga,Aguinaga-Presas-Fernandez}, where the space of Legendrian embeddings is often instead quotiented by~$\diff^+(S^1)$ (so endowing Legendrian submanifolds with a fixed orientation). 
\end{remark}

Fix a favoured Legendrian embedding~$\iota \colon K \to M$ to be the basepoint of~$\lemb\!\left(K,(M, \xi)\right)$ and its image~$\iota(K)$ to be our chosen basepoint of~$\lsub\!\left(K,(M, \xi)\right)$, and let~$\mathcal{LE}(\iota)$ be the connected component of~$\lsub\!\left(K,(M, \xi)\right)$ which contains~$\iota(K)$.

\begin{theorem}[\ref{customthm:legendrian motion group as pi_1 of embedding space}]\label{thm:legendrian motion group as pi_1 of embedding space}
    The Legendrian motion group~$\lmotion{K}{(M, \xi)}$ is isomorphic to~$\pi_1\!\left(\mathcal{LE}(\iota)\right)$.
\end{theorem}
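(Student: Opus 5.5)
We will realise the unparametrised Legendrian embedding space as a homogeneous space of the contactomorphism group and read off $\pi_1$ from the long exact sequence of a fibration. Consider the orbit map
\[
p \colon \cont_c(M,\xi) \longrightarrow \lsub\!\left(K,(M,\xi)\right), \qquad \phi \longmapsto \phi(\iota(K)).
\]
This map is equivariant for the left action of $\cont_c(M,\xi)$ on itself and on unparametrised Legendrian submanifolds. The first step is to show that $p$ is a locally trivial fibration onto its image. The image will be identified below with $\mathcal{LE}(\iota)$. The fibre over $\iota(K)$ is exactly $\cont_c\!\left((M,\xi),K\right)$, since the fibre consists of the contactomorphisms preserving $K$ setwise.

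For local triviality we use the Cerf--Palais criterion, Lemma~\ref{lem:properties of G-locally retractile spaces}. We first show that $\lemb\!\left(K,(M,\xi)\right)$ is $\cont_c(M,\xi)$-locally retractile. Given a Legendrian embedding $\gamma$ and a nearby one $\gamma'$, we produce, continuously in $\gamma'$, a compactly supported contactomorphism carrying $\gamma$ to $\gamma'$. The plan is as follows:
\begin{itemize}
\item use the Weinstein/standard neighbourhood theorem to identify a neighbourhood of $\gamma(S^1)$ with a neighbourhood of the zero section in $J^1S^1$;
\item note that a $C^\infty$-close Legendrian $\gamma'$ becomes a $1$-jet graph $j^1 h_{\gamma'}$ precomposed with a diffeomorphism of $S^1$ close to the identity, with $h_{\gamma'}$ depending continuously on $\gamma'$;
\item lift that diffeomorphism of $S^1$ to $J^1S^1$, and realise the graph by the time-one flow of the contact Hamiltonian $h_{\gamma'}$ (suitably extended);
\item cut both off with a bump function away from the zero section, so the result is compactly supported and the local inverse $\rho$ is continuous.
\end{itemize}
This step is the main technical obstacle. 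The difficulty is ensuring that the cut-off contact Hamiltonian still carries $\gamma$ exactly onto $\gamma'$, and that the construction is continuous in the Whitney topology. We expect it to be essentially the content of Theorem~\ref{thm:parametrised_fiber_bundle}, which we invoke.

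Next, the quotient $\lemb \to \lsub$ is $\cont_c$-equivariant and admits local sections; this is Proposition~\ref{prop:param_to_unparam}, local triviality of the $\diff(S^1)$-bundle. By part~(ii) of the lemma, $\lsub$ is then $\cont_c$-locally retractile, and by part~(i) the map $p$ is a locally trivial fibration (compare Theorem~\ref{thm:unparametrised_fiber_bundle}).

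Local retractility also shows that orbits are open, hence also closed, so the image of $p$ restricted to the identity component is $\mathcal{LE}(\iota)$. Conversely, every point of $\mathcal{LE}(\iota)$ lies in the orbit: a path in $\mathcal{LE}(\iota)$ lifts through the fibration starting at $\id_M$.

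The long exact sequence of the fibration $\cont_c((M,\xi),K) \to \cont_c(M,\xi) \to \mathcal{LE}(\iota)$, together with the standard fact that a fibration $F \to E \to B$ induces a bijection $\pi_1(E,F,e) \cong \pi_1(B,b)$ via $p_*$, finishes the argument. The map is simply $f \mapsto \{f_t(\iota(K))\}_t$. We check directly that it is a homomorphism for the product defined above, since $(g\circ f)_t(K)$ traces $f_t(K)$ and then $g_t(f_1(K)) = g_t(K)$. Injectivity and surjectivity follow from path lifting and homotopy lifting. Alternatively, they follow from Proposition~\ref{proposition:homotopic_iff_equivalent}: two motions with homotopic image loops are equivalent, because a homotopy of loops lifts to a homotopy of motions.
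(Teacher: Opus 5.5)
Your proposal is correct and follows the paper's own proof. The paper likewise combines Theorem~\ref{thm:parametrised_fiber_bundle}, Proposition~\ref{prop:param_to_unparam} and both parts of the Cerf--Palais lemma to obtain the fibration $\cont_c((M,\xi),K)\to\cont_c(M,\xi)\to\lsub(K,(M,\xi))$ (Theorem~\ref{thm:unparametrised_fiber_bundle}), and then compares the relative homotopy sequence with the fibration sequence. The paper packages that last step as a Five Lemma argument; you instead cite the standard bijection $p_*\colon\pi_1(E,F,e)\to\pi_1(B,b)$ directly, which handles the low-degree pointed-set terms more cleanly, and you also spell out why the relevant orbit is $\mathcal{LE}(\iota)$, which the paper leaves implicit.
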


The proof of this theorem has a number of parts, so we relegate the final piecing-together until the end of \S\ref{subsec:Leg motion group is pi_1 of Leg emb space}. Along the way, we prove some contact analogues of useful classical smooth results, which may be of independent interest.

\subsubsection{Step 1: parametrised embeddings}\label{subsubsec:step 1: parametrised embeddings}

We begin with the following lemma, which is a translation of \cite[Corollary 6.2.4]{Banyaga} to the setting of Legendrian embeddings.

\begin{lemma}\label{lem:local_contractibility}
    The space~$\lemb\!\left(K,(M,\xi)\right)$ is locally contractible.
\end{lemma}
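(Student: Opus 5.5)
The plan is to build explicit charts for $\lemb\!\left(K,(M,\xi)\right)$ by modelling a neighbourhood of a fixed Legendrian embedding $\gamma_0$ on a neighbourhood of $0$ in a Fr\'echet vector space. Fix $\gamma_0\colon S^1\hookrightarrow M$ Legendrian. By the Legendrian (Weinstein) neighbourhood theorem there is a contactomorphism $\Phi$ from a neighbourhood $U$ of $\gamma_0(S^1)$ onto a neighbourhood of the zero section in the $1$-jet space
\[
J^1S^1 = T^*S^1\times\mathbb{R},\qquad \xi_{J^1}=\ker(dz-p\,d\theta),
\]
sending $\gamma_0$ to the zero section $\theta\mapsto(\theta,0,0)$.

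The first step is to show that Legendrian embeddings $C^1$-close to $\gamma_0$ lie inside $U$. After composing with $\Phi$, such an embedding $\gamma$ projects to $S^1$ as a diffeomorphism $\phi=\pi\circ\gamma$. Hence $\gamma\circ\phi^{-1}$ is a section of $J^1S^1\to S^1$ that is also Legendrian. A Legendrian section is exactly a $1$-jet $j^1f=(\theta,f'(\theta),f(\theta))$ for some function $f\in C^\infty(S^1)$. This gives a map
\[
\gamma\longmapsto(\phi,f)\in\diff\!\left(S^1\right)\times C^\infty(S^1),
\]
with inverse $(\phi,f)\mapsto\Phi^{-1}\circ j^1f\circ\phi$. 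Both maps are continuous in the Whitney $C^\infty$ topology, since they are built from composition, inversion and differentiation, which are continuous on these spaces. So a neighbourhood of $\gamma_0$ is homeomorphic to a neighbourhood of $(\id,0)$ in $\diff\!\left(S^1\right)\times C^\infty(S^1)$.

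The second step is to use that $\diff\!\left(S^1\right)$ is locally contractible. Near $\id$ it is modelled on vector fields on $S^1$, for instance via $\phi_t=\exp(tX)$, or via lifts to $\mathbb{R}$ with straight-line homotopies of lifts near the identity. The space $C^\infty(S^1)$ is a vector space, so small balls are convex. A product of contractible neighbourhoods is contractible, and pulling back through the chart gives arbitrarily small contractible neighbourhoods of $\gamma_0$. Because the construction works at every basepoint, this yields local contractibility. The statement extends to links componentwise, using disjoint standard neighbourhoods.

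The main obstacle is the first step: showing that a Whitney neighbourhood of $\gamma_0$ really maps into the graph-like chart, and that the inverse correspondence is continuous. Concretely, one needs $C^1$-closeness to force $\pi\circ\gamma$ to be a diffeomorphism. This holds because it is $C^1$-close to the identity, and diffeomorphisms are open in $C^1$ maps of a compact manifold. One also needs inversion $\phi\mapsto\phi^{-1}$ to be continuous, which is standard for compact $S^1$. If the topology issues become delicate, the fallback is to cite the Banyaga-style argument \cite[Corollary 6.2.4]{Banyaga} directly and replace the symplectic graph chart there by this $1$-jet chart.
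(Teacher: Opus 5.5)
Your proposal is correct and follows essentially the same route as the paper: use a $1$-jet neighbourhood chart to identify Legendrian embeddings near $\gamma_0$ with a neighbourhood of $(\id,0)$ in $\diff(S^1)\times C^\infty(S^1)$ via $\gamma\mapsto(\pi\circ\Phi\circ\gamma,\,f)$, then invoke local contractibility of $\diff(S^1)$ and convexity of $C^\infty(S^1)$. One small caveat: if $\exp(tX)$ means the time-$t$ flow of a vector field, it cannot be used to contract a neighbourhood of $\id$, because the flow exponential of $\diff(S^1)$ is not locally surjective; use your second option instead, namely straight-line homotopies of lifts (equivalently the pointwise map $\phi_t(x)=\exp_x(tX(x))$).
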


\begin{proof}
    Fix the inclusion~$\iota$ as our preferred embedding. As shown in \cite[Example 2.5.11]{Geiges}, the Legendrian submanifold~$K$ has a neighbourhood in~$M$ that is contactomorphic to a neighbourhood of the zero section of~$\left(J^1(K), \xi_{jet}\right)$, the space of~$1$-jets on~$K$. The zero section corresponds to the Legendrian submanifold~$K$. 
    Any Legendrian embedding sufficiently close to~$\iota$ will have image lying in this model neighbourhood. Consequently, it is enough to study the case where~$(M,\xi)$ is~$(J^1(K),\xi_{jet})$ and~$\iota$ is the zero section. We must prove that this there is a contractible neighbourhood around~$\iota$.

    Fix a Legendrian embedding~$e$ which is~$C^1$-close to~$\iota$ in~$\lemb\!\left(K,(J^1(K),\xi_{jet})\right)$. In particular,~$e(K)$ is~$C^1$-close to~$K$ as Legendrian submanifolds of~$J^1(K)$.~$C^1$-closeness guarantees that~$e(K)$ is the graph of some section of the 1-jet bundle \cite[Proof of Theorem 6.2.3]{Banyaga}.
    We can now appeal to a result due to Lycagin \cite[1.2.4]{Lycagin} (see also \cite[Theorem 6.2.1]{Banyaga}
    ), which states that a section of the 1-jet space~$J^1(K)$ is Legendrian for~$\xi_{jet}$ if and only if it is the 1-jet of a function~$f\colon K \to \mathbb{R}$, with~$f$ being~$C^2$-close to the zero function.
    In particular, accounting for a small reparametrisation of~$e$ to match this section, a Legendrian embedding that is sufficiently close to~$\iota$ corresponds to a map that is close to~$(\id,0)$ in~$\diff(K) \times C^{\infty} (K,\mathbb{R})$, where~$0$ is the constantly vanishing function corresponding to the zero section. 
    We can reverse the process, starting with some~$(\phi, f') \in \diff(K) \times C^{\infty}(K, \mathbb{R})$ where~$\phi$ is~$C^1$-close to~$\id$ and~$f'$ is~$C^2$-close to~$0$, and obtaining a Legendrian embedding~$C^1$-close to~$\iota$. These two processes are inverses to each other.

    Hence we have a homeomorphism from a neighbourhood~$U_{\iota}$ of~$\iota$ to a neighbourhood~$V_{(\id,0)}$ of~$(\id,0)$. Now,~$\diff(K)$ is locally contractible, and the space of~$C^{\infty}$ functions from~$K$ to~$\mathbb{R}$ is convex. Hence, there is a (potentially smaller) contractible neighbourhood of~$(\id,0)$ contained in~$V_{(\id,0)}$. This gives the desired result once we pull-back via the homeomorphism.
\end{proof}

\begin{theorem}\label{thm:parametrised_fiber_bundle}
    Let~$(M, \xi)$ be a contact 3-manifold, and let~$K$ be an embedded Legendrian knot. Fix a Legendrian embedding~$\gamma \in \lemb\!\left(K,(M, \xi)\right)$ of~$K$ into~$M$. Then the map
    \begin{eqnarray*}
        p \colon \cont_c(M, \xi) & \to & \lemb\!\left(K,(M, \xi)\right) \\
        \phi & \mapsto & \phi \circ \gamma
    \end{eqnarray*}
    is a locally trivial fibration with fibre~$\cont_c^{\gamma(K)}(M, \xi)$, the subgroup of contactomorphisms of~$(M, \xi)$ (with compact support) which fix~$\gamma(K)$ \emph{pointwise}. 

    In particular, the parametrised Legendrian embedding space~$\lemb\!\left(K,(M, \xi)\right)$ is~$\cont_c(M, \xi)$-locally retractile.
\end{theorem}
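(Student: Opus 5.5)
The plan is to apply the Cerf--Palais criterion, \emph{Lemma \ref{lem:properties of G-locally retractile spaces}(\ref{lemitem:Cerf-Palais criterion})}. The map~$p$ is~$\cont_c(M,\xi)$-equivariant when~$\cont_c(M,\xi)$ acts on itself by left multiplication and on~$\lemb\!\left(K,(M,\xi)\right)$ by postcomposition. Indeed,~$p(\psi\circ\phi)=\psi\circ(\phi\circ\gamma)$, and postcomposing a Legendrian embedding with a contactomorphism gives a Legendrian embedding. So it suffices to prove the final assertion, that~$\lemb\!\left(K,(M,\xi)\right)$ is~$\cont_c(M,\xi)$-locally retractile.

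Once local triviality is known, the fibre over~$\gamma$ is~$p^{-1}(\gamma)=\{\phi : \phi\circ\gamma=\gamma\}$. This is exactly the subgroup~$\cont_c^{\gamma(K)}(M,\xi)$ of contactomorphisms fixing~$\gamma(K)$ pointwise. Because~$p$ is equivariant, all fibres over a given component are translates of this one.

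For local retractility I fix~$e\in\lemb\!\left(K,(M,\xi)\right)$ and construct a continuous map~$\rho\colon U\to\cont_c(M,\xi)$ on a neighbourhood~$U$ of~$e$, with~$\rho(e')\circ e=e'$. The construction follows the chart of \emph{Lemma \ref{lem:local_contractibility}}.
\begin{enumerate}
\item Choose a Weinstein-type neighbourhood~$N$ of~$e(K)$, contactomorphic to a neighbourhood of the zero section in~$(J^1(K),\xi_{jet})$, with~$e$ corresponding to the zero section~$\iota$.
\item A nearby~$e'$ corresponds continuously to a pair~$(\phi,f)$, with~$\phi\in\diff(K)$ near~$\id$ and~$f$ a function near~$0$: the image is~$j^1 f$, and~$e'=j^1f\circ\phi$.
\item Realise each piece by a contact isotopy. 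The reparametrisation~$\phi$ is close to the identity, so it lies on a canonical path~$\phi_s$ to~$\id$, say via the exponential map of a fixed metric or a local chart of~$\diff(K)$. Lift this path to~$J^1(K)$ using the canonical contact lift of diffeomorphisms of the base, the "1-jet prolongation", which maps~$j^1f$ to~$j^1(f\circ\phi^{-1})$. Similarly, the fibrewise translation~$(q,p,z)\mapsto(q,p+df,z+f)$ is a contactomorphism of~$J^1(K)$ carrying the zero section to~$j^1 f$.
\item Cut both contactomorphisms off so they are compactly supported in~$N$. To do this, take their contact Hamiltonians along the canonical paths (e.g.~$H_s$ generating translation by~$s\,j^1f$). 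Multiply by a bump function equal to~$1$ near the zero section and supported in~$N$. Take the time-one flow and extend it by the identity to~$M$.
\end{enumerate}
Each ingredient depends continuously on~$(\phi,f)$ in the~$C^\infty$ topology, and~$(\phi,f)$ depends continuously on~$e'$. So~$\rho$ is continuous, and~$\rho(e)=\id$.

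I expect the cutoff step to be the main obstacle. I must check that the truncated Hamiltonian flows still carry~$e(K)$ exactly onto~$e'(K)$, with the correct parametrisation, and do not merely do so approximately. This is handled by choosing the neighbourhood~$U$ small enough that the whole trajectory of the zero section under the untruncated isotopies stays in the region where the bump function equals~$1$. There the truncated flow agrees with the model flow.

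If composing the two lifts in the right order to recover~$e'=j^1f\circ\phi$ becomes messy, I would instead use a single Hamiltonian. The idea is to extend the Legendrian isotopy~$s\mapsto e_s$, from~$e$ to~$e'$ inside the chart, to an ambient contact isotopy via the standard Legendrian isotopy extension. The Hamiltonian is prescribed along~$e_s(K)$ by~$\alpha(\partial_s e_s)$ and extended canonically using the jet coordinates, which keeps the dependence on~$e'$ continuous.
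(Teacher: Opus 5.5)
Your proposal is correct, but your main construction of the local retraction differs from the paper's. Your fallback is essentially the paper's proof. The paper fixes a contractible neighbourhood $U$ of $\iota$ (from the local contractibility lemma) and, for each $e\in U$, chooses a path $e_t$ in $U$ from $\iota$ to $e$. It prescribes Hamiltonians $\alpha(\partial_t e_t)$ along $e_t(K)$, extends them to compactly supported Hamiltonians on $M$ as in the Legendrian isotopy extension theorem, and takes the time-one map. It packages this as a map $U\to\cont_c(M,\xi)$ via the autonomous field $(1,\widetilde{X}^e_t)$ on $[0,1]\times M$ and an exponential map.

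Your main route is different. You write $e'=j^1f\circ\phi$ in the jet chart and use two explicit strict contactomorphisms:
\begin{itemize}
\item the fibre translation $T_f$, generated by the Hamiltonian $f\circ\pi$;
\item the cotangent lift $\hat\phi$, generated by Hamiltonians linear in the fibre coordinate, which preserves the zero section.
\end{itemize}
These satisfy $T_f\circ\hat\phi\circ\iota_0=T_f\circ\iota_0\circ\phi=j^1f\circ\phi$, so the order of composition is not actually messy. Your cutoff argument goes through for a clear reason. The isotopy $\hat\phi_s$ keeps the zero section on itself, and $T_{sf}$ moves it through the graphs of $j^1(sf)$, which stay in $\{\beta=1\}$ once $f$ is $C^1$-small. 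Hence the truncated flows agree with the model flows along the relevant trajectories.

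What your route buys is that continuity of $\rho$ is visible from explicit formulas in $(\phi,f)$. The paper instead has to assert that the choice of paths and of Hamiltonian extensions can be made to vary smoothly in $e$, and justifies this only by contractibility of $U$. Its exponential-map detour is also unnecessary, since time-one maps of compactly supported time-dependent vector fields already depend continuously on the field. The paper's route follows the classical Palais template and needs only the general isotopy-extension mechanism, not explicit contactomorphisms of $J^1(K)$. Its local-contractibility input, however, already relies on the same jet-chart decomposition you use directly.
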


\begin{proof}
    We prove that~$\lemb\!\left(K, (M, \xi)\right)$ is~$\cont_c(M, \xi)$-locally retractile, which allows us to invoke the Cerf--Palais criterion, \emph{Lemma \ref{lem:properties of G-locally retractile spaces}(\ref{lemitem:Cerf-Palais criterion})}. 
    
    Moving in~$\lemb\!\left(K, (M, \xi)\right)$ by precomposition with contactomorphisms, start with~$K$ viewed as a submanifold of~$M$ and take~$\gamma$ to be the inclusion embedding~$\iota \in \lemb\!\left(K,(M, \xi)\right)$. 

    Pick a contractible neighbourhood~$U$ of~$\iota$ in~$\lemb\!\left(K, (M, \xi)\right)$, which we can do by \emph{Lemma \ref{lem:local_contractibility}}.
    For any embedding~$e \in U$, take a path~$\{e_t\}_{t \in [0, 1]}$ in~$U$ connecting~$e_0 \coloneqq \iota$ to~$e_1 \coloneqq e$. Define a time-dependent vector field~$X^e = \{X_t^e\}_{t \in [0, 1]}$ by
    \begin{eqnarray*}
        X_t^e \colon e_t(K) & \to & TM \\
        e_t(\theta) & \mapsto & \odv{}{s}_{s = t}\!\left(e_s(\theta)\right).
    \end{eqnarray*}

    Define a family of Hamiltonian functions~$\{H^e_t\}_{t \in [0, 1]}$ by~$H^e_t = \alpha(X^e_t)$, where~$\alpha$ is the 1-form associated to~$(M, \xi)$. Precisely, 
    \begin{eqnarray*}
        H^e_t \colon e_t(K) & \to & \mathbb{R} \\
        e_t(\theta) & \mapsto & \alpha\!\left(X^e_t\!\left(e_t(\theta)\right)\right).
    \end{eqnarray*}

    Following the proof of (\cite{Geiges}, Theorem 2.6.2), we can extend the family~$\{H^e_t\}_{t \in [0, 1]}$ to a family~$\{\widetilde{H}^e_t\}_{t \in [0, 1]}$ of compactly supported smooth functions, such that each~$\widetilde{H}^e_t \colon M \to \mathbb{R}$ has Hamiltonian vector field~$\widetilde{X}^e_t$ (that is,~$\widetilde{H}^e_t = \alpha(\widetilde{X}^e_t)$). For each~$t \in [0, 1]$, the restriction of~$\widetilde{X}^e_t$ and~$\widetilde{H}^e_t$ to~$e_t(K)$ will agree with~$X^e_t$ and~$H^e_t$ respectively. We now have a time-dependent contact vector field~$\widetilde{X}^e = \{\widetilde{X}^e_t\}_{t \in [0, 1]}$ defined on all of~$M$. Since~$U$ is contractible, we have a contractible choice of paths~$\{e_t\}$, so we can assume this assignment varies smoothly in~$e \in U$. Integrating, we obtain a family of contactomorphisms~$\{\widetilde{\varphi}^e_t\}_{t \in [0, 1]}$ of~$(M, \xi)$ such that for each~$t \in [0, 1]$ we have
    \[\widetilde{\varphi}^e_t \circ e_0 = e_t.\]
    
    We aim to use this (time-dependent) flow to define a local section~$U \to \cont(M, \xi)$ which takes~$e = e_1$ to~$\varphi^e_1$. We would like to use an exponential map (mimicking the work of \cite{PalaisLocalTriviality60}), but such is ill-defined for time-dependent vector fields. We therefore take a detour via the canonical time-independent vector field~$\overline{X}^e$ associated to~$\widetilde{X}^e$, for which we can define an exponential map. 

    Concretely, define the time-independent vector field
    \begin{align*}\tag{$\ast$}\label{eq:time_dep_to_time_indept}
        \overline{X}^e \colon [0, 1] \times M & \to T\!\left([0, 1] \times M\right)\\
        (t, x) & \mapsto \!\left(1, \widetilde{X}^e_t(x)\right).
    \end{align*}
    We will view the time-dependent flow~$\{\widetilde{\varphi}^e_t\}_{t \in [0, 1]}$ as a flow of the time-independent vector field~$\overline{X}^e$ by the following trick. Define
    \begin{eqnarray*}
        \overline{\varphi}^e \colon D \subseteq \mathbb{R} \times \!\left([0, 1] \times M\right) & \to & [0, 1] \times M \\
        \!\left(s, (t, x)\right) & \mapsto & \overline{\varphi}^e_s(t, x) \coloneqq \!\left(s+t, \widetilde{\varphi}^e_{s+t}\!\left(\left(\widetilde{\varphi}^e_t\right)^{-1}(x)\right)\right),
    \end{eqnarray*}
    where~$D$ is the flow domain on which this map makes sense. Now,~$\overline{\varphi}^e$ is the time-independent flow of~$\overline{X}^e$ if and only if~$\widetilde{\varphi}^e$ is the time-dependent flow of~$\widetilde{X}^e$ (that is,~$\overline{\varphi}^e$ is the unique solution of the time-independent initial value problem if and only if~$\widetilde{\varphi}^e$ is the unique solution of the time-dependent initial value problem). 

    By definition, for any~$(t, x) \in [0, 1] \times M$, we have 
    \[\Exp_{(t, x)}\!\left(s \cdot \overline{X}^e(t, x)\right) = \overline{\varphi}^e_s\!\left(t, x\right) = \!\left(s+t, \widetilde{\varphi}^e_{s+t}\!\left(\left(\widetilde{\varphi}^e_t\right)^{-1}(x)\right)\right).\]
    In particular, taking~$t = 0$ and~$x = e_0(\theta)$ for~$\theta \in S^1$, we have
    \begin{eqnarray*}
        \Exp_{\left(0, e_0(\theta)\right)}\!\left(s \cdot \overline{X}^e(0, e_0(\theta))\right) & = & \!\left(s, \widetilde{\varphi}^e_{s}\!\left(\left(\widetilde{\varphi}^e_0\right)^{-1}\!\left(e_0(\theta)\right)\right)\right) \\
        \implies \Exp_{\left(0, e_0(\theta)\right)}\!\left(s \cdot \!\left(1, \widetilde{X}^e_0\!\left(e_0(\theta)\right)\right)\right) & = & \!\left(s, \widetilde{\varphi}^e_{s}\!\left(e_0(\theta)\right)\right) \\
        & = & \!\left(s, e_s(\theta)\right)
    \end{eqnarray*}
    and in particular
    \[\Exp_{\left(0, e_0(\theta)\right)}\!\left(\overline{X}^e_0\!\left(e_0(\theta)\right)\right) = \!\left(1, e_1(\theta)\right).\]

    Let~$N$ be a compact neighbourhood of~$e_0(K)=\iota(K)$ in~$M$ on which the time-dependent flow~$\{\widetilde{\varphi}^e_t\}_{t \in [0, 1]}$ is supported. Write~$X_N(TM)$ for the space of vector fields on~$M$ supported on~$N$ and write~$X_N\!\left(T\!\left([0, 1] \times M\right)\right)$ for the space of vector fields on~$[0, 1] \times M$ which are supported on~$[0, 1] \times N$. 
    
    We define the map 
    \begin{eqnarray*}
        \overline{S} \colon U & \to & X_N(T([0,1]\times M)) \\
        e & \mapsto & \overline{X}^e
    \end{eqnarray*}
    as the composition of the above Hamiltonian map and the assignment in (\ref{eq:time_dep_to_time_indept}). Since the choice of~$\widetilde{X}^e$ varied smoothly in~$e$, we have that~$\overline{S}$ is smooth.
    Define the map
    \[\overline{E} \colon X_N\!\left(T\!\left([0, 1] \times M\right)\right) \to \diff([0, 1] \times M)\]
    by~$\overline{E}(\overline{Y})(s, x) = \Exp_{(0, x)}(s \cdot \overline{Y}(0, x))$ where this assignment is a diffeomorphism, and smoothed off to the identity elsewhere. 

    Let~$\operatorname{LPDiff}([0, 1] \times M)$ be the subset of~$\diff([0, 1] \times M)$ consisting of level-preserving diffeomorphisms; that is, diffeomorphisms~$g$ such that for each~$s \in [0, 1]$, we have~$g(\{s\} \times M) \subseteq \{s\} \times M$. Define the map
    \[\pi_2 \colon \operatorname{LPDiff}([0, 1] \times M) \to \diff(M)\]
    by, for any~$g \in \operatorname{LPDiff}([0, 1] \times M)$,
    \begin{eqnarray*}
        \pi_2(f) \colon M & \to & M \\
        x & \mapsto & p_2(g(1, x)),
    \end{eqnarray*}
    where~$p_2 \colon [0, 1] \times M \to M$ is the projection. 
    
    We are now ready to define our local section as
    \[\sigma \coloneqq \pi_2 \circ \overline{E} \circ \overline{S} \colon U \to \cont_c(M, \xi).\]
    We first show that this is well-defined. Let~$e \in U$; we need~$\left(\overline{E} \circ \overline{S}\right)(e)$ to be a level-preserving diffeomorphism of~$[0, 1] \times M$ such that the time-1 diffeomorphism is actually a contactomorphism. Let~$(s, x) \in [0, 1] \times M$; now
    \begin{eqnarray*}
        \left(\left(\overline{E} \circ \overline{S}\right)(e)\right)(s, x) & = & \overline{E}\left(\overline{X}^e\right)(s, x) \\
        & = & \Exp_{(0, x)}\!\left(s \cdot \overline{X}^e(0, x)\right) \\
        & = & \overline{\varphi}^e_s(0, x) \\
        & = & \left(s, \widetilde{\varphi}^e_s\!\left((\widetilde{\varphi}^e_0)^{-1}(x)\right)\right) \in \{s\} \times M.
    \end{eqnarray*}
    This verifies both requirements, since the~$\widetilde{\varphi}^e_s$ are contactomorphisms. Hence~$\sigma$ is a well-defined map. Furthermore from this calculation, we have, for any~$\theta \in S^1$,
    \begin{eqnarray*}
        (p \circ \sigma)(e)(\theta) & = & \left(\sigma(e) \circ \iota\right)(\theta) \\
        & = & (\sigma(e))(e_0(\theta)) \\
        & = & \widetilde{\varphi}^e_1\!\left((\widetilde{\varphi}^e_0)^{-1}(e_0(\theta))\right) \\
        & = & \widetilde{\varphi}^e_1(e_0(\theta)) = e(\theta),
    \end{eqnarray*}
    so~$p \circ \sigma = \id_U$. It is clear that~$\sigma$ is continuous, since~$\overline{S}$ was smooth. Hence~$\sigma$ is a local section of~$p$ near~$\iota \in \lemb\!\left(K, (M, \xi)\right)$. 
    
    We conclude that~$\lemb\!\left(K,(M, \xi)\right)$ is~$\cont_c(M, \xi)$-locally retractile. We can now apply the Cerf-Palais criterion to conclude that~$p$ is a locally trivial fibration, as required.
\end{proof}

\subsubsection{Step 2: Unparametrised embeddings}\label{subsubsec:step 2: unparametrised embeddings}

For our purposes, we will need to study the above fibration in the unparametrised setting; that is, we would like the statement of \emph{Theorem \ref{thm:parametrised_fiber_bundle}} to hold with~$\lemb\!\left(K,(M, \xi)\right)$ replaced by~$\lsub\!\left(K,(M, \xi)\right)$. In other words, we need to show that we can maintain the fibre bundle structure when we quotient out by the action of~$\diff(S^1)$.

We will adapt the classical smooth strategy (for example, as sketched in \cite[\S2.2]{Boyd-Bregman-Steinebrunner}) to the contact case. We need to prepare some working tools. As in the proof of \emph{Theorem \ref{thm:parametrised_fiber_bundle}}, we may move in the space of Legendrian embeddings by precomposition with some contactomorphism, so we start with a Legendrian submanifold~$K$ of~$M$ and take~$\gamma$ to be the inclusion~$\iota \in \lemb\!\left(K,(M, \xi)\right)$. Let~$(N_K,p_K,K)$ be a tubular neighborhood of~$K$ in~$(M,\xi)$. Here~$p_K$ denotes the submersion~$N_K \to K$.
  
We first specify a neighbourhood~$V_{\iota} \subseteq \lemb\!\left(K,(M,\xi)\right)$ of~$\iota$ as follows.

Start with 
\[A \coloneqq \{ j\in C^{\infty}(S^1,N_K)\ \colon p_K\circ j \in \emb(S^1,M) \}.\]
By \cite[Lemma 13.4]{MichorManifoldsDiffMappings80},~$A \subset \emb(S^1, M)$. Then define 
\[Q_{\iota} \coloneqq \{j \in A \colon p_K \circ j=\iota\}.\] As in \cite[13.5]{MichorManifoldsDiffMappings80}, we have~$Q_{\iota} \subset \emb(S^1, N_K)$. We then take the intersection 
\[V_{\iota} \coloneqq Q_{\iota} \cap \lemb\!\left(K,(M, \xi)\right).\]
This also defines a subset of~$\lemb\!\left(K,(N_K,\xi_\lvert)\right)$, where~$\xi_\lvert$ denotes the restriction of~$\xi$ to~$N_K$. 

Consider the quotient mapping~$u \colon \lemb\!\left(K,(M, \xi)\right) \to \lsub\!\left(K,(M, \xi)\right)$.
The following is a contact version of \cite[13.5-6]{MichorManifoldsDiffMappings80}.

\begin{lemma}\label{lem:prep for param_to_unparam prop in contact setting}
    \begin{enumerate}[(i)]
        \item The restriction
       ~$u\big|_{V_{\iota}} \colon V_{\iota} \to \lsub\!\left(K,(M, \xi)\right)$
        is injective.
        \item For any open subset~$W \subseteq \diff(S^1)$, the subset~$V_{\iota} \circ W$ is open in~$\lemb\!\left(K,(M, \xi)\right)$. \label{lempart:Michor 13.5.2 in contact setting}
        \item In the quotient topology,~$u(V_{\iota})$ is open in~$\lsub\!\left(K,(M, \xi)\right)$.
    \end{enumerate}
\end{lemma}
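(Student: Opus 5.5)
The plan is to transport Michor's smooth arguments \cite[13.5--6]{MichorManifoldsDiffMappings80} to the Legendrian setting, using only that $V_\iota \subseteq Q_\iota$ and that $\lemb\!\left(K,(M,\xi)\right)$ carries the subspace topology from $\emb(S^1,M)$ and is invariant under precomposition by $\diff(S^1)$. The key structural fact throughout is that an element $j \in Q_\iota$ satisfies $p_K \circ j = \iota$. So $j$ is determined by its image together with the retraction $p_K$: the parametrisation is forced.

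For (i), suppose $j, j' \in V_\iota$ with $u(j)=u(j')$. Then $j' = j \circ \phi$ for some $\phi \in \diff(S^1)$. Applying $p_K$ gives $\iota = p_K\circ j' = p_K \circ j \circ \phi = \iota\circ\phi$. Since $\iota$ is injective, $\phi=\id$, and hence $j=j'$. No contact geometry enters here.

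For (ii), I would first prove the smooth statement that $Q_\iota \circ W$ is open in $\emb(S^1,M)$, following \cite[13.5]{MichorManifoldsDiffMappings80}. Take $j_0\circ\phi_0$ with $j_0\in Q_\iota$ and $\phi_0 \in W$, and let $f$ be an embedding near it. Then $f$ has image in $N_K$ and $p_K\circ f$ is a diffeomorphism onto $K$ close to $\iota\circ\phi_0$. Set $\psi = \iota^{-1}\circ p_K\circ f \in \diff(S^1)$, which depends continuously on $f$ and is close to $\phi_0$, so $\psi\in W$. Then $f = (f\circ\psi^{-1})\circ\psi$, and $p_K\circ (f\circ\psi^{-1}) = \iota$, so $f\circ\psi^{-1} \in Q_\iota$. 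The main point, and the only Legendrian input, is the following: if $f$ is Legendrian, then so is $f\circ\psi^{-1}$, because being Legendrian is a condition on the image and is invariant under reparametrisation. It follows that $V_\iota\circ W = (Q_\iota\circ W)\cap\lemb\!\left(K,(M,\xi)\right)$. One inclusion is the argument just given; the other holds because precomposition preserves $\lemb$. This set is therefore open in the subspace topology.

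For (iii), the preimage $u^{-1}(u(V_\iota))$ equals $V_\iota\circ\diff(S^1)$, which is open by (ii) with $W=\diff(S^1)$. By the definition of the quotient topology, $u(V_\iota)$ is then open.

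I expect the main obstacle to be the continuity and openness step in (ii). One must check that the set of $f$ for which $p_K\circ f$ is a diffeomorphism onto $K$ is open in the Whitney topology; this holds because diffeomorphisms are open among maps $S^1\to K$ and $f$ takes values in the open set $N_K$. One must also check that $f\mapsto\psi^{-1}$ is continuous, which follows from continuity of inversion in $\diff(S^1)$. For both points I would cite Michor directly.
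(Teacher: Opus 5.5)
Your proposal is correct and matches the paper's proof step for step: (i) applies $p_K$ to $j'=j\circ\phi$, (ii) cites Michor's 13.5.2 for openness of $Q_\iota\circ W$ and intersects with $\lemb\!\left(K,(M,\xi)\right)$, and (iii) takes $W=\diff(S^1)$. The only difference is that you also sketch Michor's argument and explicitly justify the equality $(Q_\iota\cap\lemb)\circ W=(Q_\iota\circ W)\cap\lemb$ by reparametrisation-invariance of the Legendrian condition, a step the paper uses without comment.
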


\begin{proof}
    \begin{enumerate}[(i)]
        \item If~$j, j' \in V_{\iota}$ with~$u(j)=u(j')$, then~$j=j' \circ g$ for some~$g \in \diff(S^1)$. Then 
        \[\iota_{K}=p_K\circ j= p_K\circ( j' \circ g)=(p_K\circ j') \circ g= \iota \circ g,\]
        so~$g=id_{S^1}$ and~$j=j'$.
        \item By \cite[13.5.2]{MichorManifoldsDiffMappings80},~$Q_{\iota} \circ W$ is open in~$\emb(S^1, M)$. Then
        \[V_{\iota}\circ W=(Q_{\iota} \cap \lemb\!\left(K,(M, \xi)\right)\circ W= (Q_{\iota}\circ W) \cap \lemb\!\left(K,(M, \xi)\right),\]
        and so~$V_{\iota} \circ W$ is open in~$\lemb\!\left(K,(M, \xi)\right)$ by the definition of subspace topology.
        \item Take~$W = \diff(S^1)$ in (\ref{lempart:Michor 13.5.2 in contact setting}). The inverse image of~$u(V_{\iota})$ under~$u$ is~$V_{\iota} \circ \diff(S^1)$, which is open in~$\lemb\!\left(K,(M, \xi)\right)$, so~$u(V_{\iota}) \subseteq \lsub\!\left(K,(M, \xi)\right)$ is open in the quotient topology.
    \end{enumerate}
\end{proof}

\begin{proposition}\label{prop:param_to_unparam}
    Let~$(M,\xi)$ be a contact 3-manifold and~$K$ be an (unoriented) Legendrian knot in~$(M,\xi)$. Then 
    \begin{eqnarray*}
        \lemb\!\left(K,(M, \xi)\right) & \longrightarrow & \lsub\!\left(K,(M, \xi)\right) \\
        \phi & \longmapsto & \phi(K)
    \end{eqnarray*}
    is a locally trivial fibration with fibre~$\diff(S^1)$. 
\end{proposition}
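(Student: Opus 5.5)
The plan is to construct explicit local trivialisations over the open sets $u(V_{\iota})$ of Lemma \ref{lem:prep for param_to_unparam prop in contact setting}, following the smooth argument of \cite[13.5--13.6]{MichorManifoldsDiffMappings80}. Since $\cont_c(M,\xi)$ acts on both $\lemb$ and $\lsub$ by postcomposition, and this action commutes with the $\diff(S^1)$-action by precomposition, it is enough to build a trivialisation near the basepoint $\iota(K)$ for an arbitrary Legendrian submanifold $K$. Translating by contactomorphisms then handles every other point in the same connected component. In fact the construction below uses only the local structure near $K$, so it applies to every point directly.

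I would first record that $u(V_{\iota})$ is an open neighbourhood of $\iota(K)$, by Lemma \ref{lem:prep for param_to_unparam prop in contact setting}(iii), and that $u|_{V_{\iota}}$ is injective, by part (i). This gives a set-theoretic section $s = (u|_{V_{\iota}})^{-1} \colon u(V_{\iota}) \to V_{\iota}$. I would then define
\[\Phi \colon u(V_{\iota}) \times \diff(S^1) \to u^{-1}\!\left(u(V_{\iota})\right) = V_{\iota}\circ\diff(S^1), \qquad (L, g) \mapsto s(L)\circ g.\]
Its inverse is $j \mapsto \left(u(j), (p_K\circ j)^{-1}\circ\iota\right)$, suitably interpreted. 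More precisely, for $j$ in the saturation, $p_K\circ j$ is a diffeomorphism onto $K$, so $g_j := \iota^{-1}\circ p_K\circ j \in \diff(S^1)$, and then $j\circ g_j^{-1} \in V_{\iota}$. This inverse is clearly $\diff(S^1)$-equivariant, and injectivity of $\Phi$ follows from freeness of the action together with part (i).

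Continuity of the inverse reduces to continuity of $j \mapsto g_j$, which is composition and inversion in $\diff(S^1)$ and is continuous in the $C^\infty$ topology. Continuity of $\Phi$ reduces to continuity of $s$. For this I would use part (ii): the map $u|_{V_\iota}$ is open, since for $O \subseteq V_{\iota}$ open, $u^{-1}(u(O)) = O\circ\diff(S^1)$, which is open by the same argument as in (ii), applied to open subsets of $Q_{\iota}$ intersected with $\lemb$. So $u|_{V_\iota}$ is a homeomorphism onto its image, and $s$ is continuous.

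The main obstacle I expect is hidden in the condition $p_K\circ j\in\emb$. One has to check that every Legendrian embedding near $\iota$, up to reparametrisation, lies in $V_{\iota}$. This means the neighbourhood $V_{\iota}\circ\diff(S^1)$ must be genuinely nonempty and open around $\iota$, and the reparametrised $j\circ g_j^{-1}$ must still be Legendrian. The latter is automatic, because reparametrisation preserves the Legendrian condition. The former I would get from the Michor statement, intersected with $\lemb$, exactly as in part (ii). I would also handle unoriented versus oriented reparametrisations by allowing $g_j$ to be orientation-reversing, which is compatible with the fibre being all of $\diff(S^1)$.
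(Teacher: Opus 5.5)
Your proposal is correct and follows essentially the paper's argument (adapted from Michor's 13.11): your $g_j=\iota^{-1}\circ p_K\circ j$ and $j\circ g_j^{-1}$ are exactly the paper's $\gamma_\iota$ and $\delta_\iota$, and your $\Phi$ is the restricted action map $\diff(S^1)\times V_\iota\to u^{-1}(\widehat{V}_\iota)$ with the factors swapped. The only cosmetic difference is how you get continuity of $s=(u|_{V_\iota})^{-1}$: you use openness of $u|_{V_\iota}$, while the paper uses $u^{-1}(s^{-1}(T))=\delta_\iota^{-1}(T)$, and both rest on the same identity $u^{-1}(u(O))=O\circ\diff(S^1)=\delta_\iota^{-1}(O)$.
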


\begin{proof}
    Write~$\hat{\iota}_K$ and~$\widehat{V}_{\iota}$ for the respective images of~$\iota$ and~$V_{\iota}$ under the quotient map~$u$. By \emph{Lemma \ref{lem:prep for param_to_unparam prop in contact setting} (iii)},~$\widehat{V}_{\iota}$ is an open neighbourhood of~$\hat{\iota}_K$ in~$\lsub\!\left(K,(M, \xi)\right)$.

    We will show that the map
    \[\lemb\!\left(K,(M, \xi)\right)\to \lsub\!\left(K,(M, \xi)\right)\]
    is trivial over~$\widehat{V}_{\iota}$. We split the proof into six steps, following \cite[13.11]{MichorManifoldsDiffMappings80}.
    
    \begin{enumerate}
        \item Define~$s_{\iota}: \widehat{V}_{\iota} \to \lemb\!\left(K,(M, \xi)\right)$ by~$s_{\iota}=(u\big|_{{V_{\iota}}})^{-1}$. This is well defined, since~$u\big|_{V_{\iota}}$ is injective by \emph{Lemma \ref{lem:prep for param_to_unparam prop in contact setting} (i)}. Hence~$s_{\iota}$ is a local section of~$u$ around~$\iota \in \lsub\!\left(K, (M, \xi)\right)$.
        \item The fibres of~$u$ over~$\widehat{V}_{\iota}$ meet~$V_{\iota}$ in exactly one point by \emph{Lemma \ref{lem:prep for param_to_unparam prop in contact setting} (i)}. Since the action~$\rho$ of~$\diff(S^1)$ on~$\lemb\!\left(K,(M, \xi)\right)$ is free, the mapping~$\rho\big|_{\diff S^1 \times V_{\iota}} \colon \diff(S^1) \times V_{\iota} \to u^{-1}(\widehat{V}_{\iota})$ is bijective, so there is an inverse mapping 
        \[(\rho\big|_{\diff S^1 \times V_{\iota}})^{-1} \eqqcolon (\gamma_{\iota},\delta_{\iota}) \colon u^{-1}(\widehat{V}_{\iota}) \to \diff(S^1) \times V_{\iota}.\]
        Hence we have maps~$\gamma_{\iota} \colon u^{-1}(\widehat{V}_{\iota}) \to \diff(S^1)$ and~$\delta_{\iota} \colon u^{-1}(\widehat{V}_{\iota}) \to V_{\iota}$ such that for any element~$j \in u^{-1}(\widehat{V}_{\iota})$ we have
        \[\delta_{\iota}(j)\circ \gamma_{\iota}(j) = \rho(\gamma_{\iota}(j),\delta_{\iota}(j))=j\]
        and, since~$\delta_{\iota}(j) \in V_{\iota}$, we have~$p_K \circ \delta_{\iota}(j) = \iota$.
        \item \textbf{Claim.}~$\gamma_{\iota}$ is smooth. 
        \begin{proof}
            We have 
            \begin{eqnarray*}
                \iota \circ \gamma_{\iota}(j) & = & \!\left(p_K \circ \delta_{\iota}(j)\right) \circ \gamma_{\iota}(j) \\
                & = & p_K(j) \circ \!\left(\delta_{\iota}(j) \circ \gamma_{\iota}(j)\right) = p_K\circ j,
            \end{eqnarray*}
            by step (2). Hence~$\gamma_{\iota}(j) = \rho(\cdot,\iota)^{-1}\circ (p_K)_*(j)$, which means~$\gamma_{\iota} = \rho(\cdot,\iota)^{-1}\circ (p_K)_*$, and the latter is smooth. 
        \end{proof}
        \item \textbf{Claim.}~$\delta_{\iota}$ is smooth.
        \begin{proof}
            We have~$\delta_{\iota}(j) \circ \gamma_{\iota}(j)= j$, so~$\delta_{\iota}(j)=j \circ \gamma_{\iota}(j)^{-1}$ and ~$\delta_{\iota}=\rho \circ(inv \circ \gamma_{\iota}, \id)$, which is smooth (here we denote by~$inv$ the map that takes a diffeomorphism of~$S^1$ to its inverse).
        \end{proof}
        \item By the above steps,~$\rho$ is a diffeomorphism. This is our trivialization map.
        \item \textbf{Claim.} The map~$s_{\iota}: \widehat{V}_{\iota} \to V_{\iota}$ constructed in step (1) is continuous (and hence a homeomorphism onto its image).
        \begin{proof}
            For~$\hat{y} \in \widehat{V}_{\iota}$ we have~$$ \{ s_{\iota}(\hat{y})\} = \delta_{\iota}(u^{-1}(\hat{y}))$$ by construction. Let~$T \subset V_{\iota}$ be open. Then by step (4),~$\delta_{\iota}^{-1}(T)$ is open in~$u^{-1}(\widehat{V}_{\iota})$, which in turn is open in~$\lemb\!\left(K,(M,\xi)\right)$ , so 
            \[u^{-1}(s_{\iota}^{-1}(T))=u^{-1}(u(T))=\delta_{\iota}^{-1}(T)\]
            is open in~$\lemb\!\left(K,(M,\xi)\right)$. Now by the definition of quotient topology,~$s_{\iota}^{-1}(T)$ is open in~$\lemb\!\left(K,(M,\xi)\right)$. 
        \end{proof}
    \end{enumerate}

    We thus have constructed our local trivializations.
\end{proof}

\begin{theorem}\label{thm:unparametrised_fiber_bundle}
    Let~$(M,\xi)$ be a contact 3-manifold. Fix a Legendrian knot~$K$ viewed as an embedded submanifold.
    Then the map 
    \begin{eqnarray*}
        \cont_c(M,\xi) & \to & \lsub\!\left(K,(M, \xi)\right) \\
        \phi & \mapsto & \phi(K)
    \end{eqnarray*}
    is a locally trivial fibration with fibre~$\cont_c((M,\xi),K)$, where recall the latter is the space of contactomorphisms with compact support which fix~$K$ setwise.
\end{theorem}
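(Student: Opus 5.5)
The plan is to combine \emph{Theorem \ref{thm:parametrised_fiber_bundle}} and \emph{Proposition \ref{prop:param_to_unparam}} through the two parts of \emph{Lemma \ref{lem:properties of G-locally retractile spaces}}. The map $\cont_c(M,\xi) \to \lsub\!\left(K,(M,\xi)\right)$, $\phi \mapsto \phi(K)$, factors as
\[\cont_c(M,\xi) \xrightarrow{\;p\;} \lemb\!\left(K,(M,\xi)\right) \xrightarrow{\;u\;} \lsub\!\left(K,(M,\xi)\right),\]
with $p(\phi) = \phi\circ\iota$ and $u$ the quotient map. The group $\cont_c(M,\xi)$ acts on $\lemb\!\left(K,(M,\xi)\right)$ by postcomposition. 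This action commutes with the precomposition action of $\diff(S^1)$, so it descends to an action on $\lsub\!\left(K,(M,\xi)\right)$ by $\phi\cdot L = \phi(L)$. With respect to these actions, both $p$ and $u$ are $\cont_c(M,\xi)$-equivariant, and so is their composite, where $\cont_c(M,\xi)$ acts on itself by left multiplication.

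First, I show that $\lsub\!\left(K,(M,\xi)\right)$ is $\cont_c(M,\xi)$-locally retractile. By \emph{Theorem \ref{thm:parametrised_fiber_bundle}}, $\lemb\!\left(K,(M,\xi)\right)$ is $\cont_c(M,\xi)$-locally retractile. By \emph{Proposition \ref{prop:param_to_unparam}}, $u$ is a locally trivial fibration, so it has local sections; concretely, the section $s_\iota$ over $\widehat V_\iota$ from step (1) of that proof, translated around by contactomorphisms. \emph{Lemma \ref{lem:properties of G-locally retractile spaces}(\ref{lemitem:equivariant map with local sections implies target is locally retractile})} then gives local retractility of $\lsub\!\left(K,(M,\xi)\right)$. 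One can also see this directly: compose $s_\iota$ with the local map $\rho\colon U\to\cont_c(M,\xi)$ of \emph{Theorem \ref{thm:parametrised_fiber_bundle}} on $s_\iota^{-1}(U)\cap\widehat V_\iota$. Then $\rho(s_\iota(L))(K) = \rho(s_\iota(L))\circ\iota(K) = s_\iota(L)(K) = L$.

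Second, I apply the Cerf--Palais criterion, \emph{Lemma \ref{lem:properties of G-locally retractile spaces}(\ref{lemitem:Cerf-Palais criterion})}, to the equivariant map $\cont_c(M,\xi)\to\lsub\!\left(K,(M,\xi)\right)$, which yields a locally trivial fibration. The fibre over $K$ is $\{\phi : \phi(K) = K\}$, that is, compactly supported contactomorphisms preserving $K$ setwise, which is exactly $\cont_c((M,\xi),K)$. Since the map is equivariant and the total space is a group, the fibres over the other points of the image are cosets, hence homeomorphic to this one.

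The main obstacle I expect is the point-set bookkeeping. I need the descended action on $\lsub$ to be continuous, since the quotient topology on a product need not be the product of quotient topologies. I would handle this using the local triviality from \emph{Proposition \ref{prop:param_to_unparam}}: $u$ is an open map with local sections, so continuity can be checked locally through $s_\iota$. I also need the local sections of $u$ to exist near every point, not just near $\hat\iota_K$. For this I would repeat the construction of $V_\iota$ with an arbitrary base embedding, which the proof of \emph{Proposition \ref{prop:param_to_unparam}} allows. Finally, the statement concerns the connected component hit by the image. Equivariance shows that the image is a union of orbits, and local retractility shows that each orbit is open, so the map is surjective onto the component containing $K$.
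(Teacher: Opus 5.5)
Your proposal is correct and follows the paper's proof. Both factor through $\lemb\!\left(K,(M,\xi)\right)$, take local retractility from \emph{Theorem \ref{thm:parametrised_fiber_bundle}} and equivariant local sections of $u$ from \emph{Proposition \ref{prop:param_to_unparam}}, then apply part (ii) of \emph{Lemma \ref{lem:properties of G-locally retractile spaces}} followed by the Cerf--Palais criterion, and identify the fibre as the setwise stabiliser $\cont_c((M,\xi),K)$. Your additional point-set remarks (continuity of the descended action, local sections at every point, and the orbit of $K$ being open and hence closed) supply details that the paper leaves implicit.
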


\begin{proof}
    The parametrised Legendrian embedding space~$\lemb\!\left(K,(M, \xi)\right)$ is~$\cont_c(M, \xi)$-locally retractile, by \emph{Theorem \ref{thm:parametrised_fiber_bundle}}. \emph{Proposition \ref{prop:param_to_unparam}} gives that the map
    \[u \colon \lemb\!\left(K,(M, \xi)\right) \longrightarrow \lsub\!\left(K,(M, \xi)\right)\]
     is a locally trivial fibration; in particular it has local sections. It is also~$\cont_c(M, \xi)$-equivariant: for any~$\varphi \in \cont_c(M, \xi)$ and any parametrisation~$\gamma \colon S^1 \to (M, \xi)$ of~$K$, the embedded submanifold~$u\!\left(\varphi\!\left(\gamma\!\left(S^1\right)\right)\right) \subseteq (M, \xi)$ precisely coincides with the image under the action of~$\varphi$ on the embedded submanifold~$u\!\left(\gamma\!\left(S^1\right)\right)$. Hence, applying \emph{Lemma \ref{lem:properties of G-locally retractile spaces}, (\ref{lemitem:equivariant map with local sections implies target is locally retractile})}, we obtain that~$\lsub\!\left(K,(M, \xi)\right)$ is~$\cont_c(M, \xi)$-locally retractile. But now we can apply the Cerf-Palais criterion (\emph{Lemma \ref{lem:properties of G-locally retractile spaces}, (\ref{lemitem:Cerf-Palais criterion})}) to conclude that the map
    \[\cont_c(M,\xi)  \to  \lsub\!\left(K,(M, \xi)\right)\]
    is a locally trivial fibration, as required. 
    
    The stabilisers of the~$\cont_c(M, \xi)$-action on~$\lsub\!\left(K,(M, \xi)\right)$ are all isomorphic to the subgroup~$\cont_c((M, \xi), K)$ of contactomorphisms which fix~$K$ setwise.
\end{proof}

We are now ready to prove the first of our main theorems.

\begin{customthm}{\ref{thm:legendrian motion group as pi_1 of embedding space}}[\ref{customthm:legendrian motion group as pi_1 of embedding space}]
    The Legendrian motion group~$\lmotion{K}{(M, \xi)}$ is isomorphic to~$\pi_1\!\left(\mathcal{LE}(\iota)\right)$.
\end{customthm}

\begin{proof}
    The proof is an immediate application of the Five Lemma. In the following commutative diagram, the left column comes from the long-exact sequence for relative homotopy groups and the right column comes from the long-exact sequence associated to the fibration in \emph{Theorem \ref{thm:unparametrised_fiber_bundle}}.
    \[\begin{tikzcd}[cramped]
	\cdots & \cdots \\
	{\pi_1(\cont_c((M,\xi),K)),\iota)} & {\pi_1(\cont_c((M,\xi),K)),\iota)} \\
	{\pi_1(\cont_c(M,\xi))} & {\pi_1(\cont_c(M,\xi))} \\
	{\lmotion{K}{(M, \xi)}} & {\pi_1(\lsub\!\left(K,(M, \xi)\right))} \\
	{\pi_0(\cont_c((M,\xi),K)),\iota)} & {\pi_0(\cont_c((M,\xi),K)),\iota) } \\
	0 & 0
	\arrow[from=1-1, to=2-1]
	\arrow[from=1-2, to=2-2]
	\arrow["\cong"{description}, from=2-1, to=2-2]
	\arrow[from=2-1, to=3-1]
	\arrow[from=2-2, to=3-2]
	\arrow["\cong"{description}, from=3-1, to=3-2]
	\arrow[from=3-1, to=4-1]
	\arrow[from=3-2, to=4-2]
	\arrow[from=4-1, to=4-2]
	\arrow[from=4-1, to=5-1]
	\arrow[from=4-2, to=5-2]
	\arrow["{{\cong }}"{description}, from=5-1, to=5-2]
	\arrow["\cong"{description}, from=5-1, to=6-1]
	\arrow["\cong"{description}, from=5-2, to=6-2]
\end{tikzcd}\]

Note that with our conventions~$\pi_1(\lsub\!\left(K,(M, \xi)\right))$ really means~$\pi_1(\mathcal{LE}(\iota),\iota(K))$, since we are fixing a preferred basepoint (the inclusion) and working in that connected component.
\end{proof}

\subsection{Some basic properties of Legendrian motion groups}\label{subsec:basic properties of Legendrian motion groups}

We can now prove some properties of Legendrian motion groups.

\begin{proposition}\label{proposition:moving_around}
    If~$\varphi \in \cont_c((M,\xi))$, then~$\lmotion{K}{(M, \xi)} \cong \lmotion{\varphi(K)}{(M, \xi)}$. 
\end{proposition}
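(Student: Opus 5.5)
The plan is to realise the isomorphism directly at the level of relative homotopy groups, by conjugating with $\varphi$. Conjugation
\[c_\varphi \colon \cont_c(M,\xi) \to \cont_c(M,\xi), \qquad \psi \mapsto \varphi \circ \psi \circ \varphi^{-1},\]
is a homeomorphism of topological groups, with inverse $c_{\varphi^{-1}}$. It fixes $\id_M$. Since $\varphi$ has compact support, it preserves the subspace of compactly supported contactomorphisms.

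The key check is that $c_\varphi$ carries $\cont_c((M,\xi),K)$ onto $\cont_c((M,\xi),\varphi(K))$. If $\psi(K)=K$, then
\[\varphi\psi\varphi^{-1}(\varphi(K))=\varphi(\psi(K))=\varphi(K),\]
and the same argument applied to $\varphi^{-1}$ gives the reverse inclusion. Hence $c_\varphi$ is a homeomorphism of pointed pairs
\[\bigl(\cont_c(M,\xi),\cont_c((M,\xi),K),\id_M\bigr)\to\bigl(\cont_c(M,\xi),\cont_c((M,\xi),\varphi(K)),\id_M\bigr).\]
Functoriality of relative $\pi_1$ then gives a bijection between the two Legendrian motion groups. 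Concretely, a Legendrian motion $f$ of $K$ goes to $\{\varphi f_t\varphi^{-1}\}_t$, which is a Legendrian motion of $\varphi(K)$.

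It remains to check that this bijection respects the group structure. For the product of Definition \ref{def:leg_equivalent_motions}'s setting we have $c_\varphi(g_{s}\circ f_1)=c_\varphi(g_s)\circ c_\varphi(f_1)$, so $c_\varphi(g\circ f)=c_\varphi(g)\circ c_\varphi(f)$ on the nose. Similarly, $c_\varphi$ sends stationary motions to stationary motions and homotopies rel endpoints to homotopies rel endpoints. Therefore the equivalence relation, and hence the group structure, is preserved.

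As an alternative route, one could use Theorem \ref{thm:legendrian motion group as pi_1 of embedding space}. The map $L\mapsto\varphi(L)$ is a self-homeomorphism of $\lsub(K,(M,\xi))$ sending the component $\mathcal{LE}(\iota)$ to $\mathcal{LE}(\varphi\circ\iota)$, which gives an isomorphism on $\pi_1$. That this map is continuous follows because it is induced by the $\diff(S^1)$-equivariant post-composition map on $\lemb$.

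The main point requiring care is that the compact-support subgroups are preserved. This is immediate because $\mathrm{supp}(\varphi\psi\varphi^{-1})=\varphi(\mathrm{supp}\,\psi)$. Continuity of $c_\varphi$ in the Whitney $C^\infty$ topology also needs a remark. Composition is continuous on compactly supported diffeomorphism groups, and $\cont_c$ carries the subspace topology, so continuity follows. No real obstacle is expected.
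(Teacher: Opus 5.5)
Your proposal is correct and is essentially the paper's argument: conjugation $\psi \mapsto \varphi\psi\varphi^{-1}$ is a homeomorphism of pointed pairs carrying $\cont_c((M,\xi),K)$ onto $\cont_c((M,\xi),\varphi(K))$, so it induces an isomorphism of the relative fundamental groups; you in fact check the compatibility with products and stationary motions more explicitly than the paper does. One small quibble is that preserving compact supports does not require $\varphi$ itself to be compactly supported, since $\mathrm{supp}(\varphi\psi\varphi^{-1})=\varphi(\mathrm{supp}\,\psi)$ is compact for any diffeomorphism $\varphi$.
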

\begin{proof}
    The conjugation action~$\varphi' \mapsto \varphi \varphi' \varphi^{-1}$ on~$\cont_c(M, \xi)$ induces a homomorphism 
    \[\pi_1 \!\left( \cont_c(M,\xi), \cont_c((M,\xi),K), \id_M \right) \to \pi_1 \!\left( \cont_c(M,\xi), \cont_c((M,\xi),\varphi(K)), \id_{M} \right)\]
    which induces an isomorphism~$\lmotion{K}{(M, \xi)} \cong \lmotion{\varphi(K)}{(M, \xi)}$.
\end{proof}

\begin{corollary}[\ref{customcor:legendrian isotopy implies isomorphism}]\label{corollary:legendrian_isotopy_implies_isomorphism}
    If~$K,K'$ are two Legendrian isotopic links in~$(M,\xi)$, then their Legendrian motion groups are isomorphic.
\end{corollary}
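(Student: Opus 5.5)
The plan is to reduce the statement to Proposition \ref{proposition:moving_around}: a Legendrian isotopy should produce a single compactly supported contactomorphism $\varphi$ of $(M,\xi)$ with $\varphi(K)=K'$, and then conjugation by $\varphi$ gives the isomorphism $\lmotion{K}{(M, \xi)} \cong \lmotion{K'}{(M, \xi)}$. The work therefore lies in producing $\varphi$ from the isotopy, and I would do this with the fibration machinery already set up rather than by hand.

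Let $\{\gamma_t\}_{t\in[0,1]}$ be a Legendrian isotopy, viewed as a path in $\lemb\!\left(\bigsqcup S^1,(M,\xi)\right)$, with $\gamma_0$ parametrising $K$ and $\gamma_1(\bigsqcup S^1)=K'$. By Theorem \ref{thm:parametrised_fiber_bundle}, the map
\[p\colon \cont_c(M,\xi)\to \lemb\!\left(\bigsqcup S^1,(M,\xi)\right),\qquad \phi\mapsto\phi\circ\gamma_0,\]
is a locally trivial fibration. The paper states the theorem for knots but notes that it extends to links; the only change is to apply the Legendrian neighbourhood theorem componentwise. Local triviality gives path lifting, so the path $\gamma_t$ starting at $p(\id_M)=\gamma_0$ lifts to a path $\phi_t$ in $\cont_c(M,\xi)$ with $\phi_0=\id_M$ and $\phi_t\circ\gamma_0=\gamma_t$. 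Then $\varphi\coloneqq\phi_1$ is compactly supported and satisfies $\varphi(K)=K'$.

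Alternatively, one can avoid path lifting entirely: the local sections $\sigma$ built in the proof of Theorem \ref{thm:parametrised_fiber_bundle} cover the compact interval $[0,1]$ by finitely many neighbourhoods, and composing the resulting contactomorphisms gives the same $\varphi$. This is just the contact isotopy extension theorem. Either way, Proposition \ref{proposition:moving_around} applied to $\varphi$ gives the result.

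I expect the only real obstacle to be the passage from knots to links. Step 1 needs local contractibility of $\lemb$ for a disjoint union of circles, which follows from Lemma \ref{lem:local_contractibility} applied componentwise since the jet neighbourhoods can be chosen disjoint. The Hamiltonian extension in the proof of Theorem \ref{thm:parametrised_fiber_bundle} must also be done on all components simultaneously with compact support, which causes no difficulty because the components are disjoint compact sets. Everything else is formal.
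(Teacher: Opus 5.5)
Your proposal is correct and matches the paper's argument. The paper gives no separate proof and treats the corollary as an immediate consequence of Proposition \ref{proposition:moving_around}, with the compactly supported contactomorphism $\varphi$ satisfying $\varphi(K)=K'$ supplied by Legendrian isotopy extension; your path lifting through the fibration of Theorem \ref{thm:parametrised_fiber_bundle}, extended to links as the paper says it can be, gives exactly this.
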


\emph{Corollary \ref{corollary:legendrian_isotopy_implies_isomorphism}} can be viewed as a potential strategy to obstruct Legendrian isotopy between knots (and links) in contact 3-manifolds.

There is a natural forgetful map~$\lmotion{K}{(M,\xi)} \to \motion{K}{M}$. At the level of ($\pi_1$ of) embedding spaces, this map takes a loop of (unparametrised) Legendrian embeddings of~$K$ and sends it to the corresponding loop of smooth submanifolds. The following is immediate.

\begin{proposition}\label{prop:commutative_diagram}
    The following diagram is commutative.
    \[\begin{tikzcd}[cramped]
	{\lmotion{K}{(M, \xi)}} && {\motion{K}{M}} \\
	{\pi_1(\mathcal{LE}(\iota),\iota(K))} && {\pi_1(\mathcal{E}(\iota),\iota(K))}
	\arrow[from=1-1, to=1-3]
	\arrow["\cong", from=1-1, to=2-1]
	\arrow["\cong"', from=1-3, to=2-3]
	\arrow[from=2-1, to=2-3]
\end{tikzcd}\]
\end{proposition}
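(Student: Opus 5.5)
The plan is to write down both horizontal maps explicitly and check that the square commutes on representatives, using the explicit form of the vertical isomorphisms from the proofs of Theorem \ref{thm:legendrian motion group as pi_1 of embedding space} and Theorem \ref{thm:smooth motion group is pi_1 of embedding space Wattenberg}. Both vertical maps are induced by orbit maps out of a group. On the Legendrian side this is the fibration $\cont_c(M,\xi)\to\lsub(K,(M,\xi))$, $\phi\mapsto\phi(K)$, of Theorem \ref{thm:unparametrised_fiber_bundle}. On the smooth side it is the analogous Cerf--Palais fibration $\diff_c(M)\to\sub(K,M)$. In each case the isomorphism from the relative homotopy group to $\pi_1$ of the base is the standard one for a fibration: a path $f=\{f_t\}$ with $f_0=\id_M$ and $f_1$ preserving $K$ is sent to the loop $t\mapsto f_t(K)$. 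The Five Lemma diagram in the proof of Theorem \ref{thm:legendrian motion group as pi_1 of embedding space} is built from precisely this map, so I would first record that the left vertical arrow is $[f]\mapsto[t\mapsto f_t(K)]$.

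Next I would define the top horizontal arrow as the map induced on relative $\pi_1$ by the inclusion of pairs
\[\bigl(\cont_c(M,\xi),\cont_c((M,\xi),K)\bigr)\hookrightarrow\bigl(\diff_c(M),\diff_c(M,K)\bigr).\]
This is well defined because a contactomorphism preserving $K$ setwise is in particular a diffeomorphism preserving $K$, so homotopies rel the subspace are carried to homotopies rel the subspace. I would define the bottom arrow as the map on $\pi_1$ induced by the continuous inclusion $\mathcal{LE}(\iota)\hookrightarrow\mathcal{E}(\iota)$. This inclusion comes from $\lemb\subseteq\emb$, which is $\diff(S^1)$-equivariant, and so descends to the quotients.

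Commutativity is then immediate on representatives. Going right and then down sends $f$ to the loop $t\mapsto f_t(K)$ of smooth submanifolds. Going down and then right sends $f$ to the same loop $t\mapsto f_t(K)$, now regarded as a loop of Legendrian submanifolds and then forgotten to smooth ones. These are literally equal loops.

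The only point needing care, which I expect to be the main obstacle, is the bookkeeping for the vertical isomorphisms. One has to confirm that the isomorphisms produced abstractly by the long exact sequences (and by Wattenberg's Lemma 1.4) really are the path-to-loop map above, and not that map composed with some other identification. I would settle this by noting that for any fibration $p\colon E\to B$ with fibre $F$, the iso $\pi_1(E,F)\cong\pi_1(B)$ is $p_*$. This identification is natural with respect to maps of fibrations, and the inclusion $\cont_c\to\diff_c$ covering $\lsub\to\sub$ is such a map.
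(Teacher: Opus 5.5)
Your proposal is correct and is essentially the paper's argument. The paper declares the square immediate after noting that the forgetful map sends a loop of unparametrised Legendrian submanifolds to the same loop of smooth submanifolds. That is exactly your check that both composites send $[f]$ to the loop $t\mapsto f_t(K)$, once the vertical isomorphisms are identified as the path-to-loop maps $p_*$ (as the paper itself writes them in the proof of Theorem~\ref{thm:legendrian motions as contact mapping classes}).
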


We have highlighted \emph{Proposition \ref{prop:commutative_diagram}} because it shows clearly how the notion of a Legendrian motion group could be helpful for understanding the space of Legendrian embeddings. In particular, one can potentially generate non-trivial loops of Legendrian embeddings by lifting trivial loops of smooth motions to non-trivial loops of Legendrian motions via the top row of the diagram.

\section{Motion groups and (contact) mapping class groups}\label{sec:monodromy maps}

\subsection{A map from Legendrian motions to contactomorphisms of the exterior}

This section characterises the Legendrian motion group of any Legendrian knot in the standard tight~$3$-sphere.

\begin{theorem}[\ref{customthm:legendrian motions as contactomorphisms of exterior}]\label{thm:legendrian motions as contact mapping classes}
    Let~$K$ be a Legendrian knot in~$\std$.
    \begin{enumerate}[(a)]
        \item If~$K$ is Legendrian non-invertible, then there is an isomorphism
        \[\Lambda \colon \lmotionstd{K} \xlongrightarrow{\cong} \cmcgext{K}.\]
        \item If instead~$K$ is Legendrian invertible, then there is an isomorphism
        \[\Lambda \colon \lmotionstd{K} \xlongrightarrow{\cong} \cmcgext{K} \rtimes \faktor{\mathbb{Z}}{2\mathbb{Z}}.\]
        \end{enumerate}
\end{theorem}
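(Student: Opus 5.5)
We prove the statement by comparing two fibration long exact sequences and using known contractibility results for the relevant groups. By \emph{Theorem \ref{thm:legendrian motion group as pi_1 of embedding space}}, $\lmotionstd{K} \cong \pi_1(\mathcal{LE}(\iota))$, so it suffices to compute $\pi_1$ of the component of the unparametrised Legendrian embedding space.

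\textbf{Step 1: pass to parametrised embeddings and the exterior.} Apply \emph{Theorem \ref{thm:parametrised_fiber_bundle}} to $\std$. This gives a fibration
\[\cont^{K}(S^3,\xi_{std}) \to \cont(S^3,\xi_{std}) \to \lemb(K,\std).\]
By Eliashberg's theorem, $\cont(S^3,\xi_{std}) \simeq U(2)$, so $\pi_1 \cong \mathbb{Z}$ and $\pi_2=0$. Next, identify the fibre up to homotopy with the group of contactomorphisms of the exterior $S^3\setminus \nu K$ fixing the boundary. To do this, restrict contactomorphisms fixing $K$ pointwise to a standard jet neighbourhood. The space of germs along $K$ fixing $K$ pointwise is computed with the Legendrian neighbourhood theorem: its framing data form a contractible space once we require the contact framing to be preserved, up to a $\mathbb{Z}$ of twists. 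I expect the results of Fern\'andez--Min and Fern\'andez--Mart\'inez-Aguinaga--Presas to supply exactly this identification, together with the computation $\pi_1(\lemb(K,\std)) \cong \mathbb{Z}\oplus \cmcgext{K}$ up to the contribution of $\pi_1U(2)$. The boundary map $\pi_1\lemb \to \pi_0$ of the fibre then yields a map to $\cmcgext{K}$. We check that the $\pi_1(U(2))$-generator (a Hopf rotation) together with the meridional/framing loop is absorbed, so that the composite $\pi_1(\lemb_\iota) \to \cmcgext{K}$ is surjective with kernel generated by the image of $\pi_1\cont$ and the reparametrisation loop.

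\textbf{Step 2: pass from parametrised to unparametrised embeddings.} Apply \emph{Proposition \ref{prop:param_to_unparam}}. The fibration $\diff(S^1)\to \lemb \to \lsub$ gives
\[\pi_1\diff(S^1)\cong\mathbb{Z} \to \pi_1(\lemb_\iota) \to \pi_1(\mathcal{LE}(\iota)) \to \pi_0\diff(S^1)\cong \mathbb{Z}/2 \to \pi_0\lemb.\]
The image of $\pi_1\diff(S^1)$ is the rotation-reparametrisation loop. Quotienting by it kills precisely the extra $\mathbb{Z}$ from Step 1, giving $\pi_1(\lemb_\iota)/\mathbb{Z}\cong \cmcgext{K}$. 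The map $\mathbb{Z}/2 \to \pi_0\lemb$ sends the orientation reversal to $[\iota\circ r]$, which is $[\iota]$ exactly when $K$ is Legendrian invertible. In case (a) this gives a short exact sequence with trivial cokernel, hence $\Lambda$ is an isomorphism. In case (b) we get
\[1\to \cmcgext{K} \to \lmotionstd{K} \to \mathbb{Z}/2\to 1.\]
A splitting comes from a Legendrian motion $f$ realising the inversion with $f_1^2$ isotopic to the identity rel $K$. Alternatively, we can realise the splitting by an involution-type contactomorphism $f_1$ reversing $K$, which acts on $\cmcgext{K}$ by conjugation.

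\textbf{Main obstacle.} The hardest step is the identification in Step 1 of the pointwise stabiliser $\cont^{K}$ with contactomorphisms of the exterior rel boundary. This requires controlling the germ near $K$ (the framing $\mathbb{Z}$) and proving that the resulting boundary homomorphism has kernel exactly the loops coming from $\pi_1U(2)$ and reparametrisation. I would first try to prove this by restricting to a standard neighbourhood and invoking the contractibility of the space of Legendrian-neighbourhood germs. A secondary difficulty is constructing an order-two splitting in case (b). If an honest involution is unavailable, I would instead define $\Lambda$ directly on the extension using a chosen inverting motion and verify that its square lies in the kernel.
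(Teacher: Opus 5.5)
\textbf{The gap is in Step 2}, where you assert that dividing $\pi_1(\lemb_\iota)$ by the image of $\pi_1\diff(S^1)$ ``kills precisely the extra $\mathbb{Z}$ from Step 1''. Your Step 1 is a legitimate alternative to the paper's splitting $\lemb\simeq\lemb_{(p_0,v_0)}\times\U(2)$. The trouble is that your two steps divide by different subgroups.

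In Step 1 there is in fact no framing $\mathbb{Z}$. In jet coordinates with $\alpha=dz-y\,d\theta$, a contactomorphism fixing $K$ pointwise with $\phi^*\alpha=\lambda\alpha$ has normal derivative $\left(\begin{smallmatrix}\lambda&\ast\\0&\lambda\end{smallmatrix}\right)$, $\lambda>0$, along $K$. So the germ data is contractible, $\pi_0(\cont^K)\cong\cmcgext{K}$, and the kernel of $\pi_1(\lemb_\iota)\to\cmcgext{K}$ is exactly the image $N_1$ of $\pi_1\cont\std\cong\mathbb{Z}$.

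Step 2 only divides out the image $N_2$ of the reparametrisation loop $\iota\circ r_t$. Your conclusion therefore needs $N_1\subseteq N_2$, and this fails whenever $|\rot(K)|\neq1$:
\begin{enumerate}[(i)]
\item Evaluation at a point and unit tangent vector $(p_0,v_0)$ of $K$ identifies $\pi_1\cont\std$ with $\pi_1\mathbb{S}(\xi)\cong\mathbb{Z}$, by Eliashberg's theorem.
\item A loop of contactomorphisms preserving $K$ setwise evaluates into the unit tangent lift of $K$, whose class is $\pm\rot(K)$ times the generator.
\item Hence $\pi_1\cont\std$ modulo the image of $\pi_1\cont\!\left((S^3,\xi_{std}),K\right)$ surjects onto $\mathbb{Z}/\rot(K)\mathbb{Z}$, and by the exact sequence of the pair it injects into $\lmotionstd{K}$.
\item Yet a global loop $\phi_t$ lifts to $\phi_t\circ\iota$, whose image under your boundary map is $[\phi_1]=[\id]$.
\end{enumerate}
So any map to $\cmcgext{K}$ built this way has nontrivial kernel whenever $|\rot(K)|\neq1$. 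This includes every knot in case (b), since Legendrian invertibility forces $\rot(K)=-\rot(K)=0$.

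A concrete instance is the max-tb unknot $U=S^3\cap\mathbb{R}^2$. Its stabiliser in $\U(2)$ is $O(2)$. The contractibility of long max-tb Legendrian unknots (Fern\'andez--Mart\'inez-Aguinaga--Presas) also gives $\cmcgext{U}=1$, and yields $\mathcal{LE}(U)\simeq\U(2)/O(2)$. Hence $\lmotionstd{U}\cong\mathbb{Z}$, not $\mathbb{Z}/2\mathbb{Z}$. The inverting motion $t\mapsto\operatorname{diag}(e^{i\pi t},1)$ is a generator, and its square is the generator of $\pi_1\cont\std$. So no inverting motion has order two, and your proposed splitting cannot exist. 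Your fallback, checking that the square of an inverting motion lies in the kernel of the map to $\mathbb{Z}/2\mathbb{Z}$, is automatic and produces no splitting.

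You should know that the paper's own proof has the same two problems. Its Claim gives only $\im(g_1)\subseteq\pi_1(\U(2))$. Passing from $(\dagger)$ to $(\ddagger)$ needs $g_1$ to surject onto the $\pi_1(\U(2))$ factor. The $\U(2)$-component of $g_1$ of the generator is the unit tangent lift of $K$, so this happens only if $\rot(K)=\pm1$. The splitting in (b) is likewise asserted for an arbitrary inverting motion without checking its order. So this cannot be repaired by following the paper: a correct statement must account for the image of $\pi_1\cont\std$ in $\lmotionstd{K}$.
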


\begin{proof}
    Since we will always be working with the standard contact structure on~$S^3$, we will write~$\xi = \xi_{std}$, and suppress it from the notation where possible.

    Start with an embedded Legendrian knot~$K \subset S^3$, with a preferred parametrisation given by the inclusion~$\iota$. From \emph{Theorem \ref{thm:legendrian motion group as pi_1 of embedding space}}, we have an isomorphism
    \begin{eqnarray*}
        \lmotion{K}{S^3} & \xlongrightarrow{\cong} & \pi_1(\mathcal{LE}(\iota),\iota(K)) \\ 
        \left[\gamma\right] & \longmapsto & \left[\gamma(K)\right].
    \end{eqnarray*}
    Now, by \emph{Proposition \ref{prop:param_to_unparam}}, we have a locally trivial fibration
    \begin{align}
        \diff(S^1) \hookrightarrow \lemb\!\left(K,S^3\right) & \to \lsub\!\left(K,S^3\right) \label{eq:param_to_unparam}\\ 
        \varphi \; & \mapsto \; \varphi(K), \nonumber
    \end{align}
    where the fibre corresponds to reparametrisation: e.g., the fibre over~$\iota$ includes as~$\phi \mapsto \iota \circ \phi$. 

    Fix a point~$(p_0, v_0) \in \mathbb{S}(\xi)$ in the unit sphere bundle of~$\xi$. Let 
    \[\lemb_{(p_0, v_0)}\!\left(K,S^3\right) \coloneqq \left\{\gamma \in \lemb\!\left(K,S^3\right) \colon \!\left(\gamma(0), \gamma'(0)\right) = (p_0, v_0)\right\}\]
    denote the space of \emph{long Legendrian embeddings} of~$K$ into~$S^3$ realising the knot type~$K$ and with point `at infinity' of type~$(p_0, v_0)$. There is a homotopy equivalence \cite{Fernandez-Aguinaga-Presas} 
    \begin{align}\label{eqn:htpy equiv with long embeddings times U(2)}
        \lemb\!\left(K,S^3\right) \simeq \lemb_{(p_0,v_0)}\!\left(K,S^3\right)\times \operatorname{U}(2).
    \end{align}

    Here the Lie group~$\operatorname{U}(2)$ is precisely the sphere bundle~$\mathbb{S}(\xi)$, and it acts by contactomorphism on~$(S^3, \xi)$. The homotopy equivalence (\ref{eqn:htpy equiv with long embeddings times U(2)}) is given by the map
    \[\varphi \;\; \mapsto \;\; (A_{\varphi}^{-1}\cdot \varphi, A_{\varphi}),\]
    which has inverse
    \[A \cdot \psi \;\; \mapsfrom \;\; \!\left(\psi, A\right).\]

    Using the homotopy equivalence (\ref{eqn:htpy equiv with long embeddings times U(2)}) we obtain a fibration homotopy equivalent to (\ref{eq:param_to_unparam}):
    \begin{align}\label{eqn:fibration with long embeddings times U(2) as total space}
        \diff(S^1) \hookrightarrow \lemb_{(p_0,v_0)}\!\left(K,S^3\right)\times \operatorname{U}(2) & \xrightarrow{h} \lsub\!\left(K,S^3\right) \\ 
        \!\left(\psi, A\right) \; & \mapsto \; \!\left(A \cdot \psi\right)(K), \nonumber
    \end{align}
    where now the fibre over~$\iota$ includes as 
    \[g \colon \phi \mapsto \!\left(A_{\iota \circ \phi}^{-1} \cdot \!\left(\iota \circ \phi\right),\;A_{\iota \circ \phi}\right).\]

    Set~$(p_0, v_0) \coloneqq \!\left(\iota(0), \iota'(0)\right)$. Picking a basepoint~$\iota(K) \in \lsub\!\left(K,S^3\right)$, and picking the basepoint~$\!\left(\iota, \operatorname{id}\right)$ in the fibre~$h^{-1}\!\left(\iota(K)\right) \subseteq \lemb_{(p_0, v_0)}\!\left(K,S^3 \right) \times \operatorname{U}(2)$ to be its lift under~$h$, form the long exact sequence of homotopy groups associated to the fibration (\ref{eqn:fibration with long embeddings times U(2) as total space}). We obtain
    \[\begin{tikzcd}\tag{$\dagger$}\label{eq:long_exact_sequence_fibration}
	   \vdots && \\
	   {\pi_1\!\left(\diff\!\left(S^1\right), \operatorname{id}\right)} & {\pi_1\!\left(\lemb_{(p_0, v_0)}\!\left(K,S^3\right), \iota\right) \times \pi_1\!\left(\operatorname{U}(2), \operatorname{id}\right)} & {\pi_1\!\left(\mathcal{LE}(\iota), \iota(K)\right) } \\
	   && {0 \xleftarrow{g_\ast} \faktor{\mathbb{Z}}{2\mathbb{Z}},}
	   \arrow["{d_2}"', from=1-1, to=2-1]
	   \arrow["{g_1}", from=2-1, to=2-2]
	   \arrow["{h_1}", from=2-2, to=2-3]
	   \arrow["{d_1}", from=2-3, to=3-3]
    \end{tikzcd}\]
    where we have used functoriality of~$\pi_1$ in the middle term.

    \begin{claim*}
       ~$\im(g_1) \subseteq \pi_1\!\left(\U(2),\id\right)$.
    \end{claim*}

    \begin{proof}[Proof of Claim.]
        By definition of the action of~$\pi_1(\diff(S^1),\id)$ on~$\pi_1(\lemb_{(p_0,v_0)}\!\left(K,S^3\right),\iota)$, a loop of diffeomorphisms of~$S^1$ reparametrises the long knots. However, these are pointed knots, so in each of these diffeomorphisms, the image of~$0$ is always~$p_0$. Thus the loop of diffeomorphisms maps to a loop of reparametrisations of the interval. By the obvious `straightening', this is a null-homotopic loop of long Legendrian knots.
    \end{proof}

    Hence we can extract the following exact sequence:
    \[\begin{tikzcd}[cramped]\tag{$\ddagger$}\label{eq:extracted exact sequence}
	   0 & {\pi_1\!\left(\lemb_{(p_0, v_0)}\!\left(K, \std\right)\right)} & {\pi_1\!\left(\mathcal{LE}(\iota), \iota(K)\right)} & {\pi_0\!\left(\diff\!\left(S^1\right)\right)} & 0.
	   \arrow[from=1-1, to=1-2]
	   \arrow["{\overline{h}_1}", from=1-2, to=1-3]
	   \arrow["{d_1}", from=1-3, to=1-4]
	   \arrow[from=1-4, to=1-5]
    \end{tikzcd}\]
    
    We now unpack the sequence (\ref{eq:long_exact_sequence_fibration}). The connecting homomorphism~$d_1$ is defined as follows. A Legendrian motion~$f = \{f_t\} \in \lmotionstd{K}$ lifts to a path~$\{\widetilde{f}_t\}$ in~$\lemb\!\left(K, \std\right)$. The endpoint of this path can be written~$\widetilde{f}_1 = f_1 \circ \phi_1$, for~$\phi_1 \in \diff\!\left(S^1\right)$. Then~$d_1(f)$ is the class~$[\phi_1] \in \pi_0\!\left(\diff\!\left(S^1\right)\right) \cong \faktor{\mathbb{Z}}{2\mathbb{Z}}$. Hence if a motion corresponds to a Legendrian inversion of~$K$, then it maps to the generator of~$\pi_0\!\left(\diff\!\left(S^1\right)\right)$ under~$d_1$. 

    Thus if~$K$ is not Legendrian invertible, then~$\ker(d_1) = \pi_1\!\left(\mathcal{LE}(\iota), \iota(K)\right)$. Hence from (\ref{eq:extracted exact sequence}),~$\overline{h}_1$ is both injective and surjective, and we have an isomorphism
    \begin{equation}\label{eq:L motion group iso to pi1 of long embeddings}
        \pi_1\!\left(\lemb_{(p_0, v_0)}\!\left(K, \std\right)\right) \cong \pi_1\!\left(\mathcal{LE}(\iota), \iota(K)\right).
    \end{equation}

    If on the other hand~$K$ is Legendrian invertible, let~$\{f_t\}$ be a motion given by some isotopy which inverts~$K$, obtained by looking at the trace of submanifolds. Under~$d_1$ this maps to the generator of~$\pi_0\!\left(\diff\!\left(S^1\right)\right)$, so now by (\ref{eq:extracted exact sequence}) we have  that~$\pi_1\!\left(\mathcal{LE}(\iota), \iota(K)\right)$ is an extension of~$\pi_1\!\left(\lemb_{(p_0, v_0)}\!\left(K, \std\right)\right)$ by~$\faktor{\mathbb{Z}}{2\mathbb{Z}}$. Moreover, a splitting is given by mapping the generator of~$\faktor{\mathbb{Z}}{2\mathbb{Z}}$ to any motion realising an inversion of~$K$.

    Finally, in \cite{Min-Fernandez,Fernandez-Aguinaga-Presas} it is shown that there is a homotopy equivalence \begin{equation}\label{eq:homotopy_equivalence_loop_space_contactomorphism}
        \Omega\!\left(\lemb_{(p_0,v_0)}\!\left(K,S^3\right),\iota\right) \simeq \cont^{\partial}(C(K),\xi|)
    \end{equation} 
    where~$\Omega$ denotes the pointed loop space, and where~$\cont^{\partial}(C(K),\xi|)$ is the space of contactomorphisms of the Legendrian knot exterior (with contact structure induced by~$\xi$) that fix the boundary. Putting this together with the identification~$\lmotionstd{K} \cong \pi_1\!\left(\mathcal{LE}(\iota), \iota(K)\right)$ from \emph{Theorem \ref{thm:legendrian motion group as pi_1 of embedding space}} gives an isomorphism
    \[\cmcgext{K} \cong \lmotionstd{K},\]
    which is (a) from the statement.
    
    Similarly in the case of~$K$ invertible, we obtain
    \[\cmcgext{K}\rtimes \faktor{\mathbb{Z}}{2\mathbb{Z}} \cong \lmotionstd{K},\]
    which is (b).

\end{proof}

We now collect some remarks about this construction.

First of all, the isomorphisms in \emph{Theorem \ref{thm:legendrian motions as contact mapping classes}} can be described explicitly. We sketch here the map for (a); the map for (b) is the same but with an extra factor keeping track of possible inversions. The isomorphism in (\ref{eq:L motion group iso to pi1 of long embeddings}) says that we can view any loop~$f = \{f_t\}$ of unparametrised embeddings as a loop~$f'$ of parametrised embeddings such that~$f'_t\vert_{U_{p_0}} = \id$ for some open neighbourhood~$U_{p_0}$ of the point~$\iota^{-1}(p_0)$. All of these parametrised embeddings~$f'_t$ have the same fixed tangential direction~$v_0$ at~$p_0$. Next we put~$f'$ through the isomorphism 
\[\pi_1\!\left(\lemb_{(p_0,v_0)}\!\left(K,S^3\right),\iota\right) \xlongrightarrow{\cong} \cmcgext{K}\]
as follows. Writing~$f' \colon [0, 1] \to \lemb_{(p_0,v_0)}\!\left(K,S^3\right)$ such that~$f'_0 = f'_1 = \iota$, we can lift~$f'$ to a path~$\phi = \{\phi_t\}$ in~$\cont\!\left(S^3\right)$ such that~$\phi_0 = \id$,~$\phi_t \circ \iota = f'_t$, and~$\phi_t$ is fixed on some neighbourhood~$U_{p_0}$ for all~$t$. Since~$f'_1 = \iota$, the endpoint~$\phi_1$ preserves~$\iota(K)$ pointwise. This path can be homotoped to a path~$\widetilde{\phi}$ which fixes a tubular neighbourhood of the knot pointwise. Now restrict to obtain
\[\overline{\phi} \coloneqq \widetilde{\phi}_1\vert_{C(K)} \in \cont^\partial\!\left(C(K), \xi_\vert\right),\]
and the isomorphism 
\[\lmotionstd{K} \to \cmcgext{K}\]
is given by~$[f] \mapsto [\overline{\phi}]$.

\begin{remark}
    By (\ref{eq:homotopy_equivalence_loop_space_contactomorphism}), we can conclude that in order to produce non-trivial motions of~$K$ one can alternatively study the space of contactomorphisms of the knot exterior~$C(K)$. This is still a very tough task, but with the recent spark in interest towards contactomorphisms groups of compact 3-manifolds we find this reformulation quite tantalising.
\end{remark}

We finish with an interesting interpretation of \emph{Theorem \ref{thm:legendrian motions as contact mapping classes}}.
K\'alm\'an \cite{Kalman} introduced a series of examples of loops of Legendrian knots that are non-contractible contact-wise, yet contractible in~$\emb(S^1,\mathbb{R}^3)$ (or in~$\emb(S^1,S^3)$). In particular, he produces loops of \emph{max-tb} Legendrian positive~$(p,q)$-torus knots~$T_{p,q}$ with this property, where \emph{max-tb} means they have the maximal possible Thurston--Bennequin number. To prove the non-triviality of such loops K\'alm\'an uses a monodromy map taking values in the Legendrian Contact Homology (LCH) of~$T_{p,q}$ \cite{Kalman}. Alternatively, Fern\'andez--Mart\'inez-Aguinaga--Presas prove that such loops are non-trivial as loops of so-called \emph{formal} Legendrian embeddings and as loops of unparametrised knots \cite{Fernandez-Aguinaga-Presas}.

Such a loop of Legendrian submanifolds of~$(S^3,\xi_{std})$ should correspond, by \emph{Theorem \ref{thm:legendrian motion group as pi_1 of embedding space}}, to an element of the Legendrian motion group. The following language in which to do this is found in \cite[1.1.2]{Min-Fernandez}. Let~$U$ be the max-tb Legendrian unknot in~$(S^3,\xi_{std})$. We can take a standard tubular neighbourhood~$N_U$ \cite{Etnyre_lecture_notes} and foliate its boundary~$\partial N_U$ by ruling curves of slope~$q/p$, where~$p$,~$q$ are coprime positive integers. Such curves represent max-tb Legendrian positive~$(p,q)$-torus knots. Now we can obtain the K\'alm\'an loop by rotating the solid torus~$N_U$ around about its core by a~$2\pi/p$ angle. This extends to a loop of contactomorphisms of~$(S^3,\xi_{std})$, so by \emph{Theorem \ref{thm:legendrian motions as contact mapping classes}} we have the following.

\begin{proposition}
     The K\'alm\'an loop is a nontrivial element of~$\lmotion{T_{p,q}}{S^3}$.
\end{proposition}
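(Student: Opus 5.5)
The plan is to exhibit the K\'alm\'an loop as a Legendrian motion, pass it through the isomorphism $\Lambda$ of \emph{Theorem \ref{thm:legendrian motions as contact mapping classes}}, and reduce its non-triviality to the non-contractibility results already in the literature.

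\textbf{Step 1: the loop is a Legendrian motion.} Let $N_U$ be a standard neighbourhood of the max-tb unknot $U$, and let $T_{p,q}$ be a Legendrian ruling curve of slope $q/p$ on $\partial N_U$. The rotation of $N_U$ about its core through angles $2\pi t/p$, $t\in[0,1]$, extends to a path $\{f_t\}$ in $\cont(S^3,\xi_{std})$ with $f_0=\id$. Since $S^3$ is compact, compact support is automatic. Because $\partial N_U$ is foliated by ruling curves, $f_1$ maps $T_{p,q}$ onto itself, so $\{f_t\}$ is a Legendrian motion in the sense of \emph{Definition \ref{def:Legendrian_motion}}. Under the isomorphism of \emph{Theorem \ref{thm:legendrian motion group as pi_1 of embedding space}}, $[f]$ corresponds to the loop of unparametrised knots $\{f_t(T_{p,q})\}$, which is by construction the K\'alm\'an loop.

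\textbf{Step 2: locate the class.} This rotation preserves orientation of the knot, so $d_1([f])=0$ in the sequence (\ref{eq:extracted exact sequence}). Hence $[f]$ lies in the image of $\overline{h}_1$, i.e.\ it lifts to a loop in $\pi_1\!\left(\lemb_{(p_0,v_0)}(T_{p,q},S^3)\right)$. This holds whether or not $T_{p,q}$ is Legendrian invertible, so we may work in the $\cmcgext{T_{p,q}}$ factor of either case of the theorem.

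\textbf{Step 3: reduce to a known non-triviality.} Since $\overline{h}_1$ is injective, $[f]$ is trivial in $\lmotion{T_{p,q}}{S^3}$ only if its lift to a loop of long Legendrian embeddings is null-homotopic. Equivalently, it is trivial only if the contact mapping class $\Lambda([f])$ is trivial. This is the point where the lift must be matched with the right known object: K\'alm\'an's loop, viewed as a loop in $\pi_1$ of $\lemb$, or equivalently of the long embedding space up to the $\operatorname{U}(2)$ factor. Fern\'andez--Mart\'inez-Aguinaga--Presas \cite{Fernandez-Aguinaga-Presas} show directly that the loop is non-trivial as a loop of unparametrised knots. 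That is exactly non-triviality in $\pi_1(\mathcal{LE}(\iota))$, so the conclusion follows immediately from \emph{Theorem \ref{thm:legendrian motion group as pi_1 of embedding space}}.

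\textbf{Alternative route and the main obstacle.} K\'alm\'an's monodromy on Legendrian contact homology \cite{Kalman} also gives non-triviality of the parametrised loop, and one would transfer this through (\ref{eq:L motion group iso to pi1 of long embeddings}). The main obstacle on that route is showing that the $\operatorname{U}(2)$ component cannot absorb the loop. Since $\pi_1(\operatorname{U}(2))\cong\mathbb{Z}$, one would check this by evaluating the monodromy, which is invariant under the global $\operatorname{U}(2)$ action, on the $p$-th power as well. Finally, as noted in \cite[1.1.2]{Min-Fernandez}, the resulting class $\Lambda([f])$ is the non-trivial contact mapping class given by the fractional rotation restricted to the exterior.
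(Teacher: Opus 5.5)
Your proof is correct, but it closes differently from the paper's. Both arguments realise the K\'alm\'an loop as the Legendrian motion obtained by extending the $2\pi/p$ rotation of $N_U$ to a path in $\cont(S^3,\xi_{std})$.

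\begin{itemize}
\item \textbf{The paper's route.} It then invokes Theorem~\ref{thm:legendrian motions as contact mapping classes}, so the motion corresponds to a contact mapping class of the exterior, which \cite{Min-Fernandez} identify as a contact torus twist. This buys the extra geometric statement that the class is a non-trivial element of $\pi_0\cont^\partial(C(T_{p,q}))$.
\item \textbf{Your route.} Your decisive step uses only Theorem~\ref{thm:legendrian motion group as pi_1 of embedding space}, together with the non-triviality of the loop in the space of unparametrised Legendrian knots from \cite{Fernandez-Aguinaga-Presas}. It uses less machinery. In particular it never needs the sequence~(\ref{eq:extracted exact sequence}), which is where the $\pi_1(\operatorname{U}(2))$-factor has to be controlled.
\end{itemize}

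\textbf{Drop the scaffolding in Steps~2--3.} Delete Step~2 and the first half of Step~3. They rely on exactness of~(\ref{eq:extracted exact sequence}) at $\pi_1(\mathcal{LE}(\iota))$, which is not justified here.
\begin{itemize}
\item In~(\ref{eq:long_exact_sequence_fibration}) one has $\ker d_1=\operatorname{im} h_1$, and this contains the image of $\pi_1(\operatorname{U}(2))\cong\mathbb{Z}$.
\item The $\operatorname{U}(2)$-component of $g_1$ on the generator of $\pi_1(\diff(S^1))$ is the loop of unit tangent vectors of $K$. Its class in $\pi_1(\mathbb{S}(\xi))$ is $\pm\rot(K)$.
\item Max-tb positive torus knots have $\rot=0$, so this factor is not absorbed.
\end{itemize}
Hence ``$[f]$ lies in the image of $\overline{h}_1$'' does not follow. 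This is the same $\operatorname{U}(2)$ obstacle you yourself flag for your alternative route.

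\textbf{Orientation in the final step.} In \cite{Fernandez-Aguinaga-Presas}, ``unparametrised'' means modulo $\diff^+(S^1)$ (cf.\ Remark~\ref{remark:orientation}). Before applying Theorem~\ref{thm:legendrian motion group as pi_1 of embedding space}, you need that $\lemb\!\left(K,S^3\right)/\diff^+(S^1)\to\lsub\!\left(K,S^3\right)$ is a two-sheeted covering. This holds because $\diff(S^1)/\diff^+(S^1)\cong\mathbb{Z}/2\mathbb{Z}$ acts freely. The covering is then injective on $\pi_1$, which is what transfers the non-triviality into $\pi_1(\mathcal{LE}(\iota))$.

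\textbf{Step~1.} The reason $f_1(T_{p,q})=T_{p,q}$ is the $p$-fold rotational symmetry of the chosen ruling curve. It is not enough that $\partial N_U$ is foliated by ruling curves.
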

This is mentioned in different terms in \cite{Min-Fernandez}, where it is shown via Lemma 3.5 of that paper that the K\'alm\'an loop is actually generated by a \emph{contact torus twist}.

\appendix

\section{Smooth mapping classes as motions, and the Dahm homomorphism}\label{appendix:smooth_case}

In the thesis introducing smooth motion groups \cite{Dahm62}, Dahm also defined a map 
\[D \colon \motion{N}{M} \to \aut\!\left(\pi_1\!\left( M \setminus N \right)\right),\]
which encodes the natural action of a motion on the complement~$M \setminus N$. Given that we view \emph{Theorem \ref{thm:legendrian motions as contact mapping classes}} as a moral contact substitute for the Dahm homomorphism in the contact case, it is natural to wonder whether one can reinterpret the smooth Dahm homomorphism via a construction mimicking the one laid out in \S\ref{sec:monodromy maps}.

We focus here on the case~$M=S^3$ and a smooth knot~$K$. As before, there is a distinction between knots that are (smoothly) invertible and those that are not. The fibre of the naturally arising fibration is~$\diff(S^1)$ or~$\diff^+(S^1)$ respectively; we confine ourselves here to the (slightly harder) invertible case.

Start from the isomorphism
\begin{eqnarray*}
    \motion{K}{S^3} & \xlongrightarrow{\cong} & \pi_1\!\left(\calE(\iota), \iota(K)\right) \\
    \left[f\right] & \longmapsto & \left[f(K)\right]
\end{eqnarray*}

We have a locally trivial fibration 
\begin{eqnarray*}\label{eq:fibration_smooth}
    \diff(S^1) \hookrightarrow \emb(K,S^3) & \to & \sub(K,S^3) \\
    \varphi & \mapsto \varphi(L),
\end{eqnarray*}
as well as a fibration \cite{Budney}
\begin{equation}
    \emb_{(p_0,v_0)}(K,S^3) \to \emb(K,S^3) \to \faktor{\SO(4)}{\SO(2)},
\end{equation}
where~$\emb_{p_0,v_0}(S^3,K)$ is the space of \emph{long smooth knots} of smooth knot type~$K$.
We know more, in fact: there is a homotopy equivalence \cite{Budney-Cohen}
\begin{equation}\label{eq:budney-cohen}
    \SO(4) \times_{\SO(2)}  \emb_{(p_0,v_0)}\!\left(K,S^3\right) \simeq \emb(K,S^3)
\end{equation}
where the first is the \emph{balanced product} of the two~$\SO(2)$-spaces. The~$\SO(2)$-action on~$\SO(4)$ is the natural right action of the matrix group, and the left~$\SO(2)$-action on~$\emb_{(p_0,v_0)}(S^3,K)$ is by post-composition with a rotation along the compactified long-axis. Moreover, we know from \cite{Budney} that \[\diff^{\partial}\!\left(C(K)\right) \cong \Omega \!\left(\emb_{(p_0,v_0)}(K,S^3)\right),\]
where here~$\diff^{\partial}\!\left(C(K)\right)$ is the group of diffeomorphisms of the knot exterior~$C(K)$ fixing the boundary pointwise. In particular 
\begin{equation}\label{eq:smooth MCG iso to pi1 of long embeddings}
    \pi_0\diff^{\partial}\!\left(C(K)\right) \cong \pi_1\!\left( \emb_{(p_0,v_0)}(K,S^3)\right).
\end{equation}

Combining the long exact sequence of homotopy groups associated to the fibration (\ref{eq:homotopy_equivalence_loop_space_contactomorphism}) with the homotopy equivalence (\ref{eq:budney-cohen}) yields the following sequence.
\[
\begin{tikzcd}[cramped]
	\cdots & {\pi_1\!\left(\diff\!\left(S^1\right), \id\right)} & {\pi_1\!\left(\SO(4) \times_{\SO(2)}  \emb_{(p_0,v_0)}\!\left(K,S^3\right)\right)} & {\pi_1\!\left(\sub\!\left(K, S^3\right), \iota(K)\right)} \\
	&&& {0 \longleftarrow \pi_0\!\left(\diff\!\left(S^1\right), \id\right)}
	\arrow[from=1-1, to=1-2]
	\arrow[from=1-2, to=1-3]
	\arrow[from=1-3, to=1-4]
	\arrow[from=1-4, to=2-4]
\end{tikzcd}
\]

The first map on the left is the inclusion of a longitudinal rotation -- this is the \emph{Gramain loop}. The~$\pi_1(\diff(S^1),\id)$ action by rotation \emph{along the knot} is transversal to the one given by the~$\SO(2)$ factor, where the rotation is meridional \emph{along the long axis}. 

The balanced product~$\SO(4)\times_{\SO(2)}\emb_{(p_0,v_0)}(K,S^3)$ is equivalent to the orbit space 
\[\frac{\SO(4) \times \emb_{(p_0,v_0)}(K,S^3)}{\SO(2)},\]
where the action is diagonal and on the left. Performing this replacement in the above long exact sequence, and recalling that~$\pi_1\!\left(\sub\!\left(K, S^3\right), \iota(K)\right) \cong \motion{K}{S^3}$, we obtain a map~$p_{mon}$ which fits into the following exact sequence:
\[\begin{tikzcd}
	\cdots & {\mathbb{Z}} & {\pi_1\!\left(\frac{\SO(4) \times \emb_{(p_0,v_0)}(K,S^3)}{\SO(2)}, (\id,\iota)\right)} & {\motion{K}{S^3}} & {\faktor{\mathbb{Z}}{2\mathbb{Z}}} & 0
	\arrow[from=1-1, to=1-2]
	\arrow[from=1-2, to=1-3]
	\arrow["{p_{mon}}", from=1-3, to=1-4]
	\arrow[from=1-4, to=1-5]
	\arrow[from=1-5, to=1-6]
\end{tikzcd}\]

By \cite{Budney-Cohen} we know that there is a fibre sequence
\[\emb_{(p_0,v_0)}\!\left(K,S^3\right) \xhookrightarrow{i} \frac{\SO(4) \times \emb_{(p_0,v_0)}\!\left(K,S^3\right)}{\SO(2)} \to \faktor{\SO(4)}{\SO(2)}\]
from which we obtain a map 
\[i_*\colon \pi_1\!\left(\emb_{(p_0,v_0)}\!\left(K,S^3\right), \iota \right) \longrightarrow \pi_1\!\left(\frac{\SO(4) \times \emb_{(p_0,v_0)}\!\left(K,S^3\right)}{\SO(2)}, (\id,\iota)\right).\]
Since~$\faktor{\SO(4)}{\SO(2)}$ is simply-connected,~$i_*$ is surjective. Note however that in general~$i_*$ needn't be injective.

We can fit everything together in the following diagram, where the rightmost map is the Dahm homomorphism.
\[\begin{tikzcd}[cramped]
	{ \pi_1\!\left(\emb_{(p_0,v_0)}\!\left(K,S^3\right), \iota \right) } & {\pi_1\!\left(\frac{\SO(4) \times \emb_{(p_0,v_0)}\!\left(K,S^3\right)}{\SO(2)}, (\id,\iota)\right)} & {\motion{K}{S^3}} & {\Out\!\left(\pi_1(C(K))\right)} \\
	\\
	{\pi_0\!\left(\diff^{\partial}\!\left(C(K)\right)\right)}
	\arrow["{{i_*}}", two heads, from=1-1, to=1-2]
	\arrow["{{p_{mon}}}", from=1-2, to=1-3]
	\arrow["{{D}}", from=1-3, to=1-4]
	\arrow["\cong", from=3-1, to=1-1]
	\arrow[bend right=10, from=3-1, to=1-4]
\end{tikzcd}\]

It would be tempting to say that the induced map
\[D \circ p_{mon} \circ i_\ast \colon \pi_0\!\left(\diff^\partial\!\left(C(K)\right)\right) \longrightarrow \Out\!\left(\pi_1\!\left(C(K)\right)\right)\]
is a factorisation of the Waldhausen map \cite[Corollary 7.5]{Waldhausen1968} through the Dahm homomorphism.
However, Waldhausen's map is always surjective. The Dahm homomorphism is, on the other hand, not always surjective -- see for example certain torus knots \cite{GoldsmithTorusLinksin3Sphere82}.

We finish with a corollary of the above discussion, applied to the case that~$K$ is non-invertible.

\begin{corollary}\label{cor:smooth noninvertible K all motions generated by complement diffeomorphisms}
    Let~$K$ be a smooth non-invertible knot in~$S^3$. Then~$p_{mon} \circ i_\ast$ is surjective. In particular, every nontrivial motion is generated by a diffeomorphism of the knot complement.
\end{corollary}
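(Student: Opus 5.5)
The plan is to run the smooth analogue of the exact sequence (\ref{eq:extracted exact sequence}) and show that the only term that could obstruct surjectivity vanishes when $K$ is non-invertible. The map $i_\ast$ is already known to be surjective, because $\faktor{\SO(4)}{\SO(2)}$ is simply connected. So it suffices to show that $p_{mon}$ is surjective onto $\motion{K}{S^3}$.

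First, the tail of the exact sequence above reads
\[\pi_1\!\left(\tfrac{\SO(4)\times\emb_{(p_0,v_0)}(K,S^3)}{\SO(2)}\right) \xrightarrow{p_{mon}} \motion{K}{S^3} \xrightarrow{d_1} \pi_0\!\left(\diff(S^1)\right) \cong \faktor{\mathbb{Z}}{2\mathbb{Z}} \to 0,\]
so $\im(p_{mon}) = \ker(d_1)$. As in the proof of \emph{Theorem \ref{thm:legendrian motions as contact mapping classes}}, $d_1$ is computed as follows. Lift a loop of unparametrised knots $f_t(K)$ to a path of parametrised embeddings. Its endpoint is $\iota\circ\phi_1$ for some $\phi_1\in\diff(S^1)$, and $d_1[f]$ is the class $[\phi_1]$.

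Second, I will argue that $d_1$ is trivial when $K$ is non-invertible. Suppose $[\phi_1]$ is the non-trivial class, so $\phi_1$ reverses orientation. Then the lifted path is a smooth isotopy from $\iota$ to $\iota\circ\phi_1$, which is $K$ with the opposite orientation; this contradicts non-invertibility. Hence $\ker(d_1)$ is all of $\motion{K}{S^3}$, and $p_{mon}$ is onto. (Equivalently, for non-invertible $K$ the loop of submanifolds stays in the component of oriented submanifolds, so one may instead use the fibration with fibre $\diff^+(S^1)$, whose $\pi_0$ vanishes.) Composing with the surjection $i_\ast$ shows that $p_{mon}\circ i_\ast$ is surjective.

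Finally, for the "in particular", precompose with the isomorphism (\ref{eq:smooth MCG iso to pi1 of long embeddings}), $\pi_0\diff^\partial(C(K)) \cong \pi_1(\emb_{(p_0,v_0)}(K,S^3))$. This shows that every motion is the image of a mapping class of the exterior fixing the boundary. Unwinding the maps, the boundary-fixed diffeomorphism extends by the identity over the tubular neighbourhood to a diffeomorphism of $S^3$ isotopic to the identity, and the trace of that isotopy is the motion.

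The main obstacle is the identification of $d_1$ with the orientation class. This needs some care because the total space has been replaced by the balanced product. I expect the cleanest route is to compute $d_1$ directly on the original fibration $\emb(K,S^3)\to\sub(K,S^3)$ and then transport the conclusion along the homotopy equivalence (\ref{eq:budney-cohen}), which commutes with the projections to $\sub(K,S^3)$.
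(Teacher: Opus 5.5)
Your proposal is correct and is essentially the argument the paper intends. The paper gives no separate proof: for non-invertible $K$ the relevant fibre is the connected group $\diff^+(S^1)$, so $p_{mon}$ is onto, and one composes with the surjection $i_\ast$ and the isomorphism $\pi_0\diff^\partial(C(K))\cong\pi_1(\emb_{(p_0,v_0)}(K,S^3))$.

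One small correction: do not write the tail as ending in $\mathbb{Z}/2\mathbb{Z}\to 0$. That is the paper's invertible-case sequence, and its exactness would force $d_1$ to be onto, contradicting your conclusion. For non-invertible $K$ the sequence instead continues $\pi_0(\diff(S^1))\to\pi_0(\emb)$, and exactness there is precisely what your orientation argument uses. Likewise, in the unwinding, take the isotopy to the identity rel a neighbourhood of $p_0$, so that it matches the long-knot identification.
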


\footnotesize
\bibliographystyle{alpha}

\bibliography{Bibliography.bib}\medskip

\noindent Gabriel Corrigan \\
University of Glasgow \\
\href{mailto:g.corrigan.1@research.gla.ac.uk}
{g.corrigan.1@research.gla.ac.uk}

\noindent Isacco Nonino \\
University of Glasgow \\
\href{mailto:i.nonino.1@research.gla.ac.uk}
{i.nonino@research.gla.ac.uk}

\end{document}

%% file: running_preamble.tex
\usepackage{subfiles}
\usepackage{comment}
\usepackage{float}
\usepackage{marginnote}
\usepackage{tabu}
\usepackage{euscript}
\usepackage[dvipsnames]{xcolor}   	
\usepackage{quiver}
\usepackage{graphicx}			
\usepackage{amssymb}
\usepackage{mathrsfs}
\usepackage{amsthm}
\usepackage{amsmath}
\usepackage{stmaryrd}
\usepackage{tikz}
\usepackage{tikz-cd}
\usepackage{accents}
\usepackage{upgreek}
\usepackage{enumerate}
\usepackage{bm}
\usepackage{mathtools}
\usepackage{appendix}
\usepackage[all]{xy}
\usepackage{caption}
\usepackage{url}
\usepackage{float}
\usepackage{todonotes} 
\usepackage{colonequals}
\usepackage{bbm}
\usepackage{longtable}
\usepackage[full]{textcomp}
\usepackage[cal=cm]{mathalfa}
\usepackage{xparse}
\usepackage{comment}
\usepackage[noadjust]{cite} 

\usepackage{faktor} 
\usepackage{xfrac} 

\usepackage{derivative}

\usetikzlibrary{calc}
\usetikzlibrary{fadings}
\usetikzlibrary{decorations.pathmorphing}
\usetikzlibrary{decorations.pathreplacing}
\usepackage{tikz,tikz-cd,tikz-3dplot}
\usepackage{pgfplots}
\usetikzlibrary{arrows,shadows,positioning, calc, decorations.markings, 
hobby,quotes,angles,decorations.pathreplacing,intersections,shapes}
\usepgflibrary{shapes.geometric}
\usetikzlibrary{fillbetween,backgrounds}
\usetikzlibrary{arrows.meta} 

\usepackage[margin=1.05in]{geometry}

\tikzset{
  commutative diagrams/.cd, 
  arrow style=tikz, 
  diagrams={>=stealth}
}
\tikzset{
  arrow/.pic={\path[tips,every arrow/.try,->,>=#1] (0,0) -- +(0,4pt);},
  pics/arrow/.default={triangle 90}
}
\tikzset{->-/.style={decoration={
  markings,
  mark=at position .6 with {\arrow{latex}}},postaction={decorate}}
  }
\tikzset{
  c/.style={every coordinate/.try}
}

\theoremstyle{definition}
\newtheorem{innercustomthm}{Theorem}
\newenvironment{customthm}[1]
  {\renewcommand\theinnercustomthm{#1}\innercustomthm}
  {\endinnercustomthm}

\theoremstyle{definition}
\newtheorem{innercustomcor}{Corollary}
\newenvironment{customcor}[1]
  {\renewcommand\theinnercustomcor{#1}\innercustomcor}
  {\endinnercustomcor}

  \theoremstyle{definition}
\newtheorem{innercustomprop}{Proposition}
\newenvironment{customprop}[1]
  {\renewcommand\theinnercustomprop{#1}\innercustomprop}
  {\endinnercustomprop}
  
\makeatletter
\def\@tocline#1#2#3#4#5#6#7{\relax
  \ifnum #1>\c@tocdepth 
  \else
    \par \addpenalty\@secpenalty\addvspace{#2}%
    \begingroup \hyphenpenalty\@M
    \@ifempty{#4}{%
      \@tempdima\csname r@tocindent\number#1\endcsname\relax
    }{%
      \@tempdima#4\relax
    }%
    \parindent\z@ \leftskip#3\relax \advance\leftskip\@tempdima\relax
    \rightskip\@pnumwidth plus4em \parfillskip-\@pnumwidth
    #5\leavevmode\hskip-\@tempdima
      \ifcase #1
       \or\or \hskip 1em \or \hskip 2em \else \hskip 3em \fi%
      #6\nobreak\relax
    \dotfill\hbox to\@pnumwidth{\@tocpagenum{#7}}\par
    \nobreak
    \endgroup
  \fi}
\makeatother

\newcounter{marginnote}
\DeclareMathAlphabet{\mathpzc}{OT1}{pzc}{m}{it}

\usepackage[backref=page]{hyperref}
\hypersetup{
  colorlinks   = true,          
  urlcolor     = blue!65!black,          
  linkcolor    = blue!65!black,          
  citecolor   = blue             
}
\usepackage[capitalize,noabbrev]{cleveref}

\pgfplotsset{compat=1.18}

\theoremstyle{definition}
\newtheorem{theorem}{Theorem}[section]

\newtheorem*{claim*}{Claim}

\newtheorem{corollary}[theorem]{Corollary}

\newtheorem{lemma}[theorem]{Lemma}
\newtheorem{proposition}[theorem]{Proposition}
\newtheorem{remark}[theorem]{Remark}

\newtheorem*{runningexample*}{Running example}

\newtheorem*{aside*}{Aside}

\newtheorem{definition}[theorem]{Definition}

\newtheorem*{notation*}{Notation} 
\newtheorem{proposition-definition}[theorem]{Proposition-Definition}
\newtheorem{theorem-definition}[theorem]{Theorem-Definition} 

\DeclareMathOperator{\id}{id}

\newcommand{\Exp}{\operatorname{Exp}}

\newcommand{\bcd}{\begin{center}\begin{tikzcd}}
\newcommand{\ecd}{\end{tikzcd}\end{center}}

\newcommand{\calE}{\mathcal{E}}

\newcommand{\im}{\text{im }}

\newcommand{\motion}[2]{\mathcal{M}\!\left({#1}, {#2}\right)} 

\newcommand{\aut}{\operatorname{Aut}(A_\Gamma)} 

\newcommand{\diff}{\operatorname{Diff}} 
\newcommand{\cont}{\operatorname{Cont}} 
\newcommand{\emb}{\operatorname{Emb}} 
\newcommand{\sub}{\operatorname{Sub}} 
\newcommand{\lemb}{\operatorname{LEmb}}
\newcommand{\lsub}{\operatorname{LSub}} 
\newcommand{\lmotion}[2]{\ensuremath{\mathcal{L}\!\left(#1, #2\right)}}  
\newcommand{\lmotionstd}[1]{\ensuremath{\mathcal{L}\!\left(#1, \std\right)}}  
\newcommand{\cmcgext}[1]{\ensuremath{\pi_0\!\left(\cont^\partial\!\left(C(#1), \xi_\vert\right)\right)}}   
\newcommand{\std}{\ensuremath{\!\left(S^3, \xi_{std}\right)}}   
\newcommand{\tb}{\operatorname{tb}}
\newcommand{\rot}{\operatorname{rot}}

\DeclareMathOperator{\U}{U}
\DeclareMathOperator{\SO}{SO}
\DeclareMathOperator{\Out}{Out}

\newtheorem*{theorem:main}{Theorem~\ref{thm:legendrian motion group as pi_1 of embedding space}}
\newtheorem*{theorem:map_on_legendrian_motions}{Theorem~\ref{thm:map_on_legendrian_motions}}
\newtheorem*{theorem:waldhausen_map}
{Theorem~\ref{thm:factoring_waldhausen_map}}